\crefname{equation}{}{Equations}
\Crefname{equation}{Eq.}{Equations}
\numberwithin{equation}{section}
\newcommand{\bbR}{\mathbb{R}}
\newcommand{\cB}{{B}}
\newcommand{\cD}{\mathcal{D}}
\newcommand{\cH}{\mathcal{H}}
\newcommand{\cI}{\mathcal{I}}
\newcommand{\cL}{\mathcal{L}}
\newcommand{\cS}{\mathcal{S}}
\newcommand{\cT}{\mathcal{T}}
\newcommand\new{\mathrm{new}}
\newcommand\init{\mathrm{init}}
\newcommand\err{\mathrm{err}}
\newcommand\sps[1]{^{(#1)}}
\newcommand\innerproduct[3]{{\left\langle #1, ~#2\right\rangle_{#3}}}
\newcommand\mode[1]{{\left\| #1\right\|}}
\newcommand\pd[2]{\frac{\partial #1}{\partial #2}}
\newcommand\pdt[3]{\frac{{\partial}^{#3} #1}{\partial {#2}^{#3}}}
\newcommand\dd{{\,\mathrm{d}}}
\newtheorem{lemma}{Lemma}
\newtheorem{remark}{Remark}
\newtheorem{proposition}{Proposition}
\title{A Simple Multiscale Method for Mean Field Games}
\author{
  Haoya Li\thanks{Department of Mathematics, Stanford University,
	Stanford, CA 94305, email: {\tt lihaoya@stanford.edu}}, ~~
  Yuwei Fan\thanks{Department of Mathematics, Stanford University,
	Stanford, CA 94305, email: {\tt ywfan@pku.edu.cn}},~~
  Lexing Ying\thanks{Department of Mathematics and Institute for
  Computational and Mathematical Engineering, Stanford University,
	Stanford, CA 94305, email: {\tt lexing@stanford.edu}}
}
\begin{document}

\date{}
\maketitle 

\begin{abstract}
This paper proposes a multiscale method for solving the numerical solution of mean field
games which accelerates the convergence and addresses the problem of determining the initial guess. Starting from an approximate solution at the coarsest level, the method constructs
approximations on successively finer grids via alternating sweeping, which not only allows for the use of classical time marching numerical schemes, but also enables applications to both local and nonlocal problems. At each level, numerical relaxation is used to stabilize the iterative process. A second-order discretization
scheme is derived for higher order convergence. Numerical examples are provided to demonstrate the
efficiency of the proposed method in both local and nonlocal, 1-dimensional and 2-dimensional cases. 
\vspace*{4mm}

\noindent {\bf Keywords: } Mean field games; alternating sweeping; multiscale method; numerical
relaxation; second-order scheme.
\end{abstract}

\section{Introduction}
Mean Field Games (MFG) theory was first introduced by Lasry and Lions in \cite{Lasry2007} and by
Huang, Caines and Malham{\'e} in \cite{huang2006} independently for studying the asymptotic behavior
of stochastic differential games as the number of agents tends to infinity.  The agents in the
system are assumed to be identical, and any individual agent has little impact on the outcome of the
game. Each individual agent aims to minimize a certain cost, and the strategy adopted is influenced
by the average of a certain function of the states of the other agents \cite{achdou2018mean}. In the
limit of an infinite number of agents, a typical MFG can be described by the following system
\begin{subequations}\label{eq:MFG}
  \begin{align}
    &\left\{ 
      \begin{aligned}\label{HJB}
        &u_t - \nu \Delta u + H(x, \nabla u) = V[m],  \\ 
        &u(t = 0, x) = u_0(x)+V_{0}[m(t=0)](x),
      \end{aligned} 
    \quad\qquad x \in \Omega\subset\bbR^d;\right.  \\[2mm]
    &\left\{
      \begin{aligned}\label{KFP}
        &-m_t - \nu \Delta m - \nabla \cdot (m\nabla_pH(x, \nabla u)) = 0, \\
        &m(t = T, x) = m_T(x), 
      \end{aligned}
    \qquad x \in \Omega \subset \bbR^d, \right.
  \end{align}
\end{subequations} 
where $m(t,x)$ stands for the density of the agents, and $u(t,x)$ is the cost function (or negative
utility). Among the two equations, the first one \eqref{HJB} is a forward Hamilton-Jacobi-Bellman
equation (HJB). The operators $V\geq 0$ and $V_0\geq 0$ encode the impact of other agents (via the
density function). On each point of its trajectory, each agent choose a velocity that locally minimizes its travel cost, and this leads to the appearance of the Hamiltonian $H(x,\nabla u)$. The second equation \eqref{KFP} is a backward Kolmogorov-Fokker-Planck equation (KFP), which results from the motion of individual agents in $\Omega$.

The existence and regularity of the solution have been studied extensively in the literature. For
the stationary case, a Bernstein integral method is used to obtain regularity bounds and existence
of smooth solutions in \cite{CIRANT20151294}. For the evolutionary case, Gagliardo-Nirenberg type
inequalities are used to develop priori bounds for the solutions when the Hamiltonian is
subquadratic (see \cite{Gomes2015}). The Hopf–Cole transformation gives an explicit way to study the
case where the Hamiltonian is quadratic (see \cite{cardaliaguet2012long}). However, this
transformation cannot be used to study superquadratic problems. In the superquadratic case, a
nonlinear adjoint method uses the adjoint equation to represent solutions by integrals with respect
to the adjoint variable and leads to priori bounds for the solutions (see \cite{Gomes2013}). For a
systematic probabilistic approach, we refer to the reference \cite{Rene}, in which the MFG system is studied in a standard stochastic control framework, and a useful analytical tool--the master equation is studied. We also refer the readers to \cite{Gueant2011} and \cite{Cardaliaguet2012Notes} for more detailed discussions of the MFGs.

In the past decade, several numerical methods for MFG have been developed. In \cite{achdou2010mean}, Achdou
and Capuzzo-Dolcetta proposed a first-order method for MFG with a first-order upwind scheme
for the spatial discretization and an implicit discretization in the time direction. This method
preserves many good properties of the MFG, for example, the convergence result proved in \cite{achdou2013mean}, but the resulting discrete system is a large-scale
nonlinear system. The Newton-Raphson method is employed for its numerical solution but the cost is
relatively high. Following the finite difference discretization in \cite{achdou2010mean}, \cite{briceno2018proximal} and \cite{briceno2019implementation} developed variational approaches for the stationary and time-dependent MFG with local couplings. The classical V-cycle and W-cycle multigrid preconditioner are used in \cite{achdou2012iterative, briceno2019implementation, andreev2017preconditioning}. In \cite{Carlini2012, Carlini2013A, Carlini2013, Carlini2015A, carlini2018fully, carlini2018discretization}, Carlini and Silva used the optimal control formula to represent the solution of the HJB equation. Using uniform
partition of the time interval, they solve the HJB equation recursively by considering a discrete
optimal control problem in every single time step. Then a mass conservative scheme is used for the
KFP equation in an alternating way to solve the whole MFG system. The convergence property is also
studied in these references. In \cite{OLIVIER2012MEAN}, Gu\'{e}ant studied a particular type of MFG system with a quadratic Hamiltonian via a change of variables given in \cite{Gueant2011}. In this way, the MFG system
becomes two coupled heat equations with similar source terms. It is then possible to construct a
scheme that yields monotonic sequences of approximate solutions. In \cite{Benamou2015,Benamou2017}, the authors studied the variational form of the MFG system with congestion penalization and investigated its duality. The variational problem is related with the solution of MFG system via the Fenchel-Rockafellar duality theorem. Since the variational problem is convex, an augmented-Lagrangian method can be applied to solve it efficiently. 

For the MFG system without the randomness of behavior of agents (i.e. without the Laplacian term),
various methods have been developed. The works
\cite{lin2018splitting,chow2018algorithm,chow2019algorithm} apply the genralized Hopf and Lax
formulas of the HJB equation and conjure the Hamiltonian in the discretization. These algorithms
have the advantage that they do not suffer from the curse of dimensionality. The work in
\cite{liu2020computational} addresses a type of MFG with nonlocal interaction (for example, when the
operator $V$ is nonlocal) by using kernel-based representations of mean-field interactions and
feature-space expansions. Most recently, neural network type methods, such as the APAC-net in
\cite{lin2020apac} and the framework based on Lagrangian method \cite{ruthotto2020machine}, have
also been applied to mean field game problems, especially in high dimensions.

\paragraph{Contributions.}
The main difficulty of solving MFG systems comes from the forward-backward nature of the
system. Numerical discretization of such a system leads to a nonlinear large system of equations. To
overcome this difficulty, we introduce a simple alternating sweeping algorithm to decouple the two equations
and to make the application of time marching schemes possible. More specifically, the method solves
the HJB equation or the KFP equation alternatingly by fixing the other quantity. This allows us to
solve the MFG system by solving two time-dependent PDEs, and the overall cost is decreased
remarkably when the convergence is guaranteed. In order to make this work, two critical issues need
to be addressed: (1) whether the algorithm converges and (2) how to choose a good initial guess?

To address the first question, we study the condition of convergence and introduce a relaxation
technique for satisfying this condition. The convergence can then be guaranteed with a proper
selection of the relaxation factor. Numerical simulations show that the relaxation technique is
highly effective.

To answer the second question, we introduce a multiscale method. More specifically, the problem is
discretized with a sequence of successively finer grids. At the coarsest level, the system is
relatively small, hence it can be solved directly by, for example, the Newton iteration in
\cite{achdou2010mean}. At each finer level, interpolating the solution from the previous level
provides a good initial guess for the system at this level, and the solution is refined using
alternating sweeping of the HJB and KFP equations. This process is repeated hierarchically until one
reaches the finest grid. This multiscale method not only provides a good initial guess for each
level, but also dramatically accelerates the convergence of the alternating sweeping.  For the
discretization of the MFG system, we also introduce a second-order scheme in order to reduce the
numerical error. More specifically, a Beam-Warming scheme is applied on the spatial discretization,
and a Crank-Nicolson scheme is introduced on the temporal discretization. The scheme is proved to
preserve the conservation of mass.

\paragraph{Contents.}
The rest of the paper is organized as follows. \cref{sec:alg} provides the main description of the
numerical scheme. The general framework of the alternating sweeping procedure and the relaxation
method is proposed in \cref{sec:as}, the multiscale algorithm is detailed in \cref{sec:ms}, and the
second-order finite difference scheme in introduced in \cref{sec:dicr}. \cref{sec:application}
studies the numerical performance of the propsoed algorithms. Finally \cref{sec:conclusion}
concludes with some discussion for future work.

\section{Numerical Algorithm}\label{sec:alg} 
\subsection{Notations}\label{notations}

To simplify the discussion, let us consider the spatial domain $\Omega=[0,1]^d$ with the periodic
boundary condition. We introduce the notations for $1$-dimensional case as it is straightforward to
extend to the $d$-dimensional case.

We partition $\Omega$ by a hierarchical uniform Cartesian mesh with step size
$h\sps{\ell}=\frac{1}{N\sps{\ell}}=2^{-\ell}$ for each level $\ell=L_0, \ldots,L$, where $L_0\leq L$
are given positive integers. For each $\ell$, the grid points are denoted as
$x_i\sps{\ell}=ih\sps{\ell}$, with $i=1,\ldots,N\sps{\ell}$. In addition, for each $\ell$, the
time interval $[0, T]$ is also uniformly partitioned with time step
$\tau\sps{\ell}=\frac{T}{N_t\sps{\ell}}=2^{-\ell}T$. The grid points in time are denoted as
$t_n\sps{\ell} = n\tau\sps{\ell}$. One can easily extend to the case where different stepsizes are
used.

For the level $\ell$ mesh, denote the approximations to $u(x_i\sps{\ell}, t_n\sps{\ell})$ and
$m(x_i\sps{\ell}, t_n\sps{\ell})$ by $u_i^{n,(\ell)}$ and $m_i^{n,(\ell)}$, respectively.  It is
often convenient to abbreviate $\left(u_i^{n,(\ell)}\right)_{i=0,\ldots,2^{\ell}-1}$ and
$\left(m_i^{n,(\ell)}\right)_{i=0,\ldots,2^{\ell}-1}$ as $u^{n,(\ell)}$ and $m^{n,(\ell)}$, and
further denote $(u^{n,(\ell)})_{n=0,\dots,N_t}$ and $(m^{n,(\ell)})_{n=0,\dots,N_t}$ by
$U\sps{\ell}$ and $M\sps{\ell}$, respectively. Notice that the lower-case notations $u$ and $m$
represent the solution on the mesh for a given time, and the upper-case notations $U$ and $M$
represent the whole solution on a given level.

As an example of the notations adopted, a backward Euler discretization of the 1-dimensional MFG
equations \cref{eq:MFG} on level $\ell$ is of the following form, for $i=1,\dots, N\sps{\ell}$,
\begin{subequations}\label{eq:nMFG}
  \begin{align}
    &\left\{
      \begin{aligned}\label{eq:nHJB}
        & \frac{u_i^{n+1, (\ell)}-u_i^{n, (\ell)}}{\tau\sps{\ell}} + (\cL u^{n+1, (\ell)})_i +
        H(x_i\sps{\ell}, {(\cD u^{n+1, (\ell)})}_i) = V[m^{n, (\ell)}]_i, 
        \quad n = 0,\dots,N_t\sps{\ell}-1, \\
        & u_i^{0, (\ell)} = u_0(x_i\sps{\ell})+V_0[m^{0, (\ell)}]_i; \\ 
     \end{aligned}\right. \\[2mm]
    &\left\{
      \begin{aligned} \label{eq:nKFP}
        &-\frac{m_i^{n, (\ell)}-m_i^{n-1, (\ell)}}{\tau\sps{\ell}} + 
        (\cL m^{n-1,(\ell)})_i - B_i(m^{n-1, (\ell)}, u^{n, (\ell)}) = 0, 
        \quad n = N_t\sps{\ell}, N_t\sps{\ell}-1,\dots,1, \\
        &m_i^{N_t\sps{\ell}, (\ell)} = m_T(x_i\sps{\ell}), 
     \end{aligned}
     \right.
  \end{align}
\end{subequations}
where $\cD$ stands for the discretization of the spatial gradient operator $\nabla$, and $\cL$
stands for the discretization of $-\nu\Delta$, a scalar multiple of the Laplace operator, and $B$ is
the discrete analog of $\nabla \cdot (m\nabla_{p}H(x,u))$. For the sake of brevity we overload the
notation $\cD$, $\cL$, $H$, $V$ and $V_0$ for the corresponding operators on grid functions on each
level. In the paper, if the level $\ell$ is clear in the context, the superscript $\ell$ will be
omitted. 

\subsection{Alternating sweeping}\label{sec:as} % big circle
Due to the forward-backward structure of system \cref{eq:nMFG}, one cannot directly apply a time
marching scheme. A simple but key observation is that if one fixes the value of $M$, the system
\cref{eq:nHJB} becomes a single forward parabolic equation with an initial condition. Similarly, if
we fix the value of $U$, the system \cref{eq:nKFP} becomes a backward parabolic equation with a
terminal condition. In this way, we arrive at a natural way to adopt time marching schemes.

More specificallly, we start from an initial guess $M_{\mathrm{init}}$, and use some time marching
scheme to solve \cref{eq:nHJB} while fixing $M$, then solve \cref{eq:nKFP} while fixing $U$, and
repeat this process until we reach a fixed point. We call this algorithm the \emph{alternating
sweeping}, which is depicted in \cref{alg:AS}.
\begin{algorithm}[ht]
  \caption{Alternating Sweeping for the MFG}
  \label{alg:AS}
  \textbf{Input:} Initial guess of the density $M_{\mathrm{init}}$, tolerance $\epsilon$\\
  \textbf{Output:} Numerical solution $M$ and $U$
  \begin{algorithmic}
    \Function{AlternatingSweeping}{$M_{\init}$}
    \State $M \leftarrow M_{\init}$, $U \leftarrow 1$;
    \Do
    \State Fixing $M$ and solving \cref{eq:nHJB} gives $U_{\new}$;
    \State Compute $\err_U = \mode{U_{\new}-U}/\mode{U}$;
    \State $U \leftarrow U_{\new}$; 
    \State Fixing $U$ and solving \cref{eq:nKFP} gives $M_{\new}$;
    \State Compute $\err_M = \mode{M_{\new}-M}/\mode{M}$
    \State $M \leftarrow M_{\new}$; %$U \leftarrow U_{\new}$;
    \doWhile{$\max\{\err_M, \err_U\} > \epsilon$}
    \State \textbf{return} $M$ and $U$;
  \EndFunction
\end{algorithmic}
\end{algorithm}

Now, we briefly analyze the alternating sweeping procedure from the viewpoint of fixed point
iteration. In the following discussion, we denote $M_k$ and $U_k$ as the approximate solution of
$M$ and $U$ after the $k$-th iteration in \cref{alg:AS}. %In \cite{achdou2010mean}, several
For the clarity of notation, we rewrite the functions in \cref{eq:nMFG} as:
\begin{equation}\label{fix}
    F(U, M) = 0, \qquad G(U, M) = 0
\end{equation}
by moving everything to the left hand side.

For $k \geq 2$, in the k-th iteration of Alternating Sweeping, we start from an approximation of
$m$ on the grid points, i.e. $M_{k-1}$, and then we obtain $U_{k}$ from the equation $F(U_{k},
M_{k-1}) = 0$, and finish this step by getting $M_{k}$ from $G(U_{k},M_{k}) = 0$. For $k=1$ we start
from $M_{\mathrm{init}}$. This is a variant of the usual fixed point iteration method (see for example \cite{gautschi1997numerical}). We give a convergence result in the following proposition. 

\begin{proposition}\label{cvg}
  Assume that $F$ and $G$ are continuously differentiable, and that $F_u(U^*,M^*)$ and $G_m(U^*,M^*)$ are invertible, where $U^*$ and $M^*$ are the solution to the equation \eqref{fix}, and 
  \begin{equation}\label{jacob}
    \begin{aligned}
      F_{u} &:= \pd{F}{U}(U^*,M^*),  &  F_{m} &:= \pd{F}{M}(U^*,M^*), \\
      G_{u} &:= \pd{G}{U}(U^*,M^*),  &  G_{m} &:= \pd{G}{M}(U^*,M^*).
    \end{aligned}
  \end{equation}
  Then $M_{k}$ converges to $M^*$ locally if 
  \begin{equation}\label{rho1}
    \rho(G_{m}^{-1}G_{u}F_{u}^{-1}F_{m}) < 1,
  \end{equation}
  where $\rho$ denotes the spectral radius, and $U^{k}$ converges to $U^*$ locally if
  \begin{equation}\label{rho2}
    \rho(F_{u}^{-1}F_{m}G_{m}^{-1}G_{u}) < 1. 
  \end{equation}
  Moreover, the conditions \eqref{rho1} and \eqref{rho2} are equivalent, and when they hold, 
  the convergence rate of \cref{alg:AS} is
  \begin{equation}
    r = \rho(G_{m}^{-1}G_{u}F_{u}^{-1}F_{m}) = \rho(F_{u}^{-1}F_{m}G_{m}^{-1}G_{u}).
  \end{equation}
\end{proposition}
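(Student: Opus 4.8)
The plan is to regard one sweep of \cref{alg:AS} as a fixed-point map on the density variable $M$ (respectively on the cost variable $U$), to linearize this map at the solution $(U^*,M^*)$, and then to invoke the standard local convergence theorem for fixed-point iterations (Ostrowski's theorem), which characterizes a point of attraction by the condition that the spectral radius of the iteration Jacobian is strictly less than one.

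First I would use the implicit function theorem to turn the two solve-steps into well-defined $C^1$ maps near $(U^*,M^*)$. Since $F$ and $G$ are continuously differentiable and $F_u$, $G_m$ are invertible at the solution, the relation $F(U,M)=0$ defines $U=\phi(M)$ in a neighborhood of $M^*$ with $\phi(M^*)=U^*$, and $G(U,M)=0$ defines $M=\psi(U)$ near $U^*$ with $\psi(U^*)=M^*$. For $k\ge 2$ the $M$-sweep is then the composition $M_k=\Psi(M_{k-1}):=\psi(\phi(M_{k-1}))$, and $M^*$ is a fixed point of $\Psi$.

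Second I would compute the iteration Jacobian by implicit differentiation and the chain rule. Differentiating $F(\phi(M),M)=0$ gives $\phi'(M^*)=-F_u^{-1}F_m$, and differentiating $G(U,\psi(U))=0$ gives $\psi'(U^*)=-G_m^{-1}G_u$, so that
\[
  \Psi'(M^*)=\psi'(U^*)\,\phi'(M^*)=G_m^{-1}G_u F_u^{-1}F_m,
\]
whose spectral radius is exactly the quantity in \eqref{rho1}. Ostrowski's theorem then yields local convergence $M_k\to M^*$ under \eqref{rho1}, with asymptotic linear rate $\rho(\Psi'(M^*))$. Applying the same argument to the $U$-sweep $U_{k+1}=\Phi(U_k):=\phi(\psi(U_k))$ produces the iteration Jacobian $\Phi'(U^*)=F_u^{-1}F_m G_m^{-1}G_u$, giving \eqref{rho2} and convergence $U_k\to U^*$.

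For the equivalence of \eqref{rho1} and \eqref{rho2} and the equality of rates, I would invoke the elementary fact that for square matrices $A,B$ of the same size $AB$ and $BA$ share the same characteristic polynomial, hence the same spectrum. Setting $A=G_m^{-1}G_u$ and $B=F_u^{-1}F_m$ — both square, since $U$ and $M$ carry the same number of unknowns on a given level — gives $\rho(G_m^{-1}G_u F_u^{-1}F_m)=\rho(F_u^{-1}F_m G_m^{-1}G_u)$, establishing \eqref{rho1}$\Leftrightarrow$\eqref{rho2} and identifying the common value as the convergence rate $r$. The step I expect to require the most care is the clean application of the implicit function theorem: I must ensure the two solve-steps are genuinely well-defined $C^1$ maps on a common neighborhood of the fixed point, so that their composition, its linearization, and the hypotheses of Ostrowski's theorem are all legitimately in force; by contrast, the linear-algebra identity $\sigma(AB)=\sigma(BA)$ is routine once the matrices are seen to be square of equal dimension.
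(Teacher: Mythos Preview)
Your proposal is correct and follows essentially the same route as the paper: linearize one full sweep at $(U^*,M^*)$ to identify the iteration Jacobian $G_m^{-1}G_uF_u^{-1}F_m$, then appeal to the standard spectral-radius criterion for local convergence of fixed-point iterations. The only cosmetic differences are that the paper writes the linearization via a direct Taylor expansion of $F$ and $G$ rather than invoking the implicit function theorem and chain rule explicitly, and that the paper proves $\rho(AB)=\rho(BA)$ via Gelfand's formula (its Lemma~\ref{lemma:specrad}) instead of the characteristic-polynomial identity you use; both routes are standard and yield the same conclusion.
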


In order to prove this proposition, we need the following lemma on the spectral radius of matrices. 
\begin{lemma}\label{lemma:specrad}
  \begin{equation}
    \forall A, B \in \mathbb{C}^{n\times n}, \quad \rho(AB) = \rho(BA).
  \end{equation}
\end{lemma}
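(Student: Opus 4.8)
The plan is to prove the stronger statement that $AB$ and $BA$ have the same characteristic polynomial, which immediately yields equality of the spectral radii. Recall that $\rho(M)=\max\{|\lambda|:\lambda\in\sigma(M)\}$ is determined by the multiset of eigenvalues, and the eigenvalues of a matrix are exactly the roots of its characteristic polynomial. Hence, once we establish
\[
\det(\lambda I - AB) = \det(\lambda I - BA) \qquad \text{for all } \lambda,
\]
the matrices $AB$ and $BA$ must have identical spectra (with multiplicity), and in particular $\rho(AB)=\rho(BA)$.

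First I would dispose of the easy case in which $A$ is invertible. Here $BA = A^{-1}(AB)A$, so $AB$ and $BA$ are \emph{similar}; similar matrices have the same characteristic polynomial, and the identity holds at once. This already settles the claim on the set of invertible $A$, which is dense in $\mathbb{C}^{n\times n}$.

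The main work, and the main obstacle, is removing the invertibility hypothesis, since for singular $A$ the matrices $AB$ and $BA$ need not be similar. To handle this I would use a continuity (density) argument. Fix $B$ and $\lambda$, and consider $p(A) := \det(\lambda I - AB) - \det(\lambda I - BA)$, which is a polynomial in the $n^2$ entries of $A$ and therefore continuous. By the previous step $p(A)=0$ whenever $A$ is invertible. Since $A-\epsilon I$ is invertible for all but finitely many $\epsilon$, the invertible matrices are dense in $\mathbb{C}^{n\times n}$; continuity of $p$ then forces $p(A)=0$ for every $A$ and every $\lambda$. This extends the characteristic-polynomial identity to all pairs $(A,B)$, completing the argument.

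An alternative that sidesteps the density input entirely is a single explicit block-matrix similarity, which I would keep in reserve as a fully uniform proof. With $P=\begin{pmatrix} I & A \\ 0 & I \end{pmatrix}$ one verifies by direct multiplication that
\[
P \begin{pmatrix} 0 & 0 \\ B & BA \end{pmatrix} P^{-1} = \begin{pmatrix} AB & 0 \\ B & 0 \end{pmatrix}.
\]
Both sides are block lower-triangular $2n\times 2n$ matrices, so their characteristic polynomials are $\lambda^{n}\det(\lambda I - BA)$ and $\lambda^{n}\det(\lambda I - AB)$ respectively; equating them (the two matrices being similar) and cancelling the common factor $\lambda^{n}$ again gives $\det(\lambda I - AB)=\det(\lambda I - BA)$, hence $\rho(AB)=\rho(BA)$. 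I expect to present the continuity argument as the main line and mention this block computation as a clean way to avoid the singular-case obstacle altogether.
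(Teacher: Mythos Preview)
Your proof is correct, but it follows a genuinely different route from the paper's. You prove the stronger statement that $AB$ and $BA$ have the same characteristic polynomial, first for invertible $A$ via similarity and then extending to all $A$ by density/continuity (with the block-matrix identity as a uniform alternative). The paper instead uses Gelfand's spectral radius formula $\rho(T)=\lim_{k\to\infty}\|T^k\|^{1/k}$: from $(AB)^k = A(BA)^{k-1}B$ and submultiplicativity one gets $\|(AB)^k\|^{1/k}\le \|A\|^{1/k}\,\|(BA)^{k-1}\|^{1/k}\,\|B\|^{1/k}$, and letting $k\to\infty$ gives $\rho(AB)\le\rho(BA)$, with equality by symmetry. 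Your argument is more elementary and yields more (identical spectra with multiplicities, not merely equal spectral radii), while the paper's argument avoids determinants and characteristic polynomials entirely and, since it only uses norm submultiplicativity and Gelfand's formula, extends verbatim to any Banach algebra.
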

The proof of this lemma is given in \cref{proof:specrad}. Now we can prove proposition~\ref{cvg}. 

\begin{proof}
Take the Taylor expansion of $F$ and $G$ at $U^*$ and $M^*$ we get 
\begin{equation}
  \begin{aligned}
    F(U_{k}, M_{k-1}) &= F(U^*, M^*) + F_u(U_{k} - U^*) + F_{m}(M_{k-1}-M^*) + o(|U_{k} - U^*|+|M_{k-1}-M^*|), \\
    G(U_{k},M_{k}) &= G(U^*,M^*) + G_{u}(U_{k} - U^*) + G_{m}(M_{k}-M^*) + o(|U_{k} - U^*|+|M_{k}-M^*|).
  \end{aligned}
\end{equation}

Thus we have
\begin{equation}\label{eq:diffmap}
  M_{k}-M^* = G_{m}^{-1}G_{u}F_{u}^{-1}F_{m}(M_{k-1}-M^*) + o(|M_{k-1}-M^*|). 
\end{equation}
Hence $M_{k}$ converges to $M^*$ locally with rate $\rho(G_{m}^{-1}G_{u}F_{u}^{-1}F_{m})$ if \eqref{rho1} holds (see for example \cite{gautschi1997numerical}). Similarly, $U^{k}$ converges to $U^*$ locally with rate $\rho(F_{u}^{-1}F_{m}G_{m}^{-1}G_{u})$ if \eqref{rho2} holds. 

By Lemma~\ref{lemma:specrad}, we have $\rho(G_{m}^{-1}G_{u}F_{u}^{-1}F_{m})=\rho(F_{u}^{-1}F_{m}G_{m}^{-1}G_{u})$, thus the conditions \eqref{rho1} and \eqref{rho2} are equivalent, which closes the proof. 
\end{proof}

\begin{remark}
  One can also first solve $M$ with fixed $U$ and then solve $U$ with fixed $M$. 
  Due to the upper analysis, the convergence rate of the corresponding scheme keeps the same.
\end{remark}

The upper discussion only guarantees the convergence of the alternating sweeping
method when $M_{\mathrm{init}}$ is sufficiently close to the real solution and the condition in
\cref{cvg} ($\rho(G_{m}^{-1}G_{u}F_{u}^{-1}F_{m})<1$) is satisfied. Thus, in order to make the
alternating sweeping work, one needs to address two critical issues: (1) ensure that the spectral
condition is satisfied (2) choose the initial guess $M_{\mathrm{init}}$ carefully. The relaxation
method in \cref{sec:relax} is concerned with (1) and and a multiscale method in \cref{sec:ms}
addresses (2).

\subsubsection{Relaxation}\label{sec:relax}
As a fixed point iteration, the alternating sweeping converges locally if the spectral radius of the
Jacobian matrix at the fixed point is smaller than $1$, as is clarified in \cref{cvg}.  However,
conditions \eqref{rho1} and \eqref{rho2} may not be satisfied. In order to address this, we propose
a relaxation technique to improve the convergence of \cref{alg:AS}. More specifically, we use
\begin{equation} \label{eq:relax}
  \begin{aligned}
    U&\leftarrow \alpha U_{\mathrm{new}} + (1-\alpha)U_{\mathrm{old}}, &
    M&\leftarrow \alpha M_{\mathrm{new}} + (1-\alpha)M_{\mathrm{old}},
  \end{aligned}
\end{equation}
when updating $U$ and $M$ in \cref{alg:AS} instead of $U\leftarrow U_{\mathrm{new}}$ and $M\leftarrow M_{\mathrm{new}}$, where $\alpha\in (0,1]$ is a relaxation factor. 

To see how the relaxation technique helps the condition in \cref{cvg} on spectral radius, we state the following proposition. 

\begin{proposition}\label{prop:relax}
  Assume that $F$ and $G$ are continuously differentiable, and that $F_u(U^*,M^*)$ and $G_m(U^*,M^*)$ are invertible, where $U^*$ and $M^*$ are the solution to the equation \eqref{fix}, and $F_u$ and $G_m$ are defined in \cref{cvg}. Then relaxation \cref{eq:relax} guarantees local convergence if $\alpha$ satisfies
  \[
  0 < \alpha< 2/(1+\rho(G_{m}^{-1}G_{u}F_{u}^{-1}F_{m})), 
  \]
  as long as 
  \begin{equation}\label{relaxcondJ}
  \left|\lambda_{max}(G_{m}^{-1}G_{u}F_{u}^{-1}F_{m})\right| < 1.
\end{equation} 
  where $\lambda_{max}$ denotes the eigenvalue with the largest mode. 
\end{proposition}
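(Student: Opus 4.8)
The plan is to reduce the claim to a single spectral estimate on the linearized \emph{relaxed} iteration map, mirroring the proof of \cref{cvg}. Building on the first-order error recursion \eqref{eq:diffmap}, the unrelaxed composite map $M_{k-1}\mapsto M_k$ has Jacobian $J:=G_{m}^{-1}G_{u}F_{u}^{-1}F_{m}$ at the fixed point. Relaxation \eqref{eq:relax} replaces the raw update $M\leftarrow M_{\new}$ by the convex combination $M\leftarrow \alpha M_{\new}+(1-\alpha)M_{\mathrm{old}}$, so the linearized error recursion becomes
\begin{equation}
  M_{k}-M^* = J_\alpha\,(M_{k-1}-M^*) + o(|M_{k-1}-M^*|), \qquad J_\alpha := \alpha J + (1-\alpha)I.
\end{equation}
By the same local-convergence criterion invoked in \cref{cvg} (see \cite{gautschi1997numerical}), it then suffices to show $\rho(J_\alpha)<1$ whenever $0<\alpha<2/(1+\rho(J))$ and $\rho(J)=|\lambda_{max}(J)|<1$.

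Next I would pass to the spectrum. Since $J_\alpha=\alpha J+(1-\alpha)I$, every eigenvalue of $J_\alpha$ has the form $\mu=\alpha\lambda+(1-\alpha)$ for some eigenvalue $\lambda$ of $J$, so the goal becomes the uniform bound $|\alpha\lambda+(1-\alpha)|<1$ over all eigenvalues $\lambda$ of $J$. Writing $\lambda=a+bi$ with $a^2+b^2=|\lambda|^2\le \rho(J)^2=:\rho^2$, a direct expansion gives
\begin{equation}
  |\alpha\lambda+(1-\alpha)|^2 = (1-\alpha+\alpha a)^2 + \alpha^2 b^2,
\end{equation}
and, after subtracting $1$ and dividing by $\alpha>0$, the desired inequality $|\mu|^2<1$ is equivalent to $\alpha\,[(1-a)^2+b^2]<2(1-a)$. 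Because $a\le|\lambda|\le\rho<1$, the right-hand side is strictly positive, so this is a genuine upper bound on $\alpha$.

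The crux of the argument, and the step I expect to require the most care, is to make this bound uniform over the whole spectrum, i.e.\ to show that the worst case over the disk $\{|\lambda|\le\rho\}$ is exactly $\lambda=-\rho$, yielding the threshold $2/(1+\rho)$. I would minimize $f(a,b)=2(1-a)/[(1-a)^2+b^2]$ over $a^2+b^2\le\rho^2$: for fixed $a$ the quotient decreases in $b^2$, so the minimum lies on the boundary $b^2=\rho^2-a^2$, where $f$ reduces to $2(1-a)/(1-2a+\rho^2)$; differentiating in $a$ shows this is increasing, since the numerator of the derivative is $2(1-\rho^2)>0$. Hence the minimum is attained at $a=-\rho$, $b=0$ and equals $2(1+\rho)/(1+\rho)^2=2/(1+\rho)$. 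Therefore $0<\alpha<2/(1+\rho)$ forces $|\mu|<1$ for every eigenvalue, giving $\rho(J_\alpha)<1$ and local convergence of $M_k$.

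Finally, I would note that the corresponding statement for $U_k$ follows by the same reasoning: the relaxed $U$-map has Jacobian $\alpha\,(F_{u}^{-1}F_{m}G_{m}^{-1}G_{u})+(1-\alpha)I$, and \cref{lemma:specrad} yields $\rho(F_{u}^{-1}F_{m}G_{m}^{-1}G_{u})=\rho(J)$, so the two relaxed maps share the same spectral radius and the same $\alpha$-range guarantees convergence of both. The only subtlety worth flagging is the identification of the linearized relaxed map with $\alpha J+(1-\alpha)I$; I would justify it by noting that the relaxation acts on the composite update for $M$, so the factor $\alpha$ multiplies the unrelaxed Jacobian $J$ while the carried-over term $(1-\alpha)M_{\mathrm{old}}$ contributes $(1-\alpha)I$.
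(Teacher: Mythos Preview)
Your argument follows the same route as the paper's proof: linearize the relaxed fixed-point map to obtain the Jacobian $J_\alpha=\alpha J+(1-\alpha)I$ with $J=G_{m}^{-1}G_{u}F_{u}^{-1}F_{m}$, and then show $\rho(J_\alpha)<1$ under the stated hypotheses. The only difference is that the paper simply asserts the implication ``$\rho(J_\alpha)<1$ whenever $0<\alpha<2/(1+\rho(J))$ and $|\lambda_{\max}(J)|<1$'' after rewriting $J_\alpha=I-\alpha(I-J)$, whereas you supply the explicit eigenvalue computation and the optimization over the disk $\{|\lambda|\le\rho\}$ that actually justifies it; your version is therefore the more complete of the two.
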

\begin{proof}
When $F$ and $G$ are continuously differentiable and $F_u(U^*,M^*)$ and $G_m(U^*,M^*)$ are invertible, \cref{alg:AS} leads to the fixed point iteration $M_{k+1} = \phi(M_{k})$ for a continuously differentiable function $\phi$ by the implicit function theorem, and $\phi(M^*) = M^*$. Moreover, the Jacobian matrix $J = \pd{\phi}{M}(M^*) = G_{m}^{-1}G_{u}F_{u}^{-1}F_{m}$, which is clear from \eqref{eq:diffmap}. Now we plug in \eqref{eq:relax}, which leads to the iteration scheme
\begin{equation}\label{relax}
  M_{k+1} = \alpha\phi(M_k)+(1-\alpha)M_k.
\end{equation}
By taking Taylor expansion of $\phi$ at $M^*$, we get
\begin{equation*}
  \phi(M_k) = \phi(M^*) + J(M_k-M^*) + o(\mode{M_k-M^*}).
\end{equation*}
Plugging this equation into \eqref{relax} leads to
\begin{equation*}
  \begin{aligned}
    M_{k+1} - M^* &= \alpha(\phi(M_k) - M^*) + (1-\alpha)(M_k - M^*)\\
    &= (\alpha J + (1-\alpha)\cI)(M_k - M^*) + o(\mode{M_k-M^*}),
  \end{aligned}
\end{equation*}
where $\cI$ is the identity matrix. Thus the iteration converges locally if (see for example \cite{gautschi1997numerical})
\begin{equation}\label{relaxcondition}
  \rho(\alpha J + (1-\alpha)\cI) < 1.
\end{equation}
Notice that
\begin{equation}
  \alpha J + (1-\alpha)\cI = \cI - \alpha(\cI-J).
\end{equation}
We see that condition \eqref{relaxcondition} is satisfied for $\alpha < 2/(1+\rho(G_{m}^{-1}G_{u}F_{u}^{-1}F_{m}))$ as long as $\left|\lambda_{max}(J)\right| < 1$. By Proposition~\ref{cvg}, the local convergence of $U_k$ holds under the same condition, which closes the proof. 
\end{proof}

From Proposition~\ref{prop:relax} we see that the relaxed scheme \eqref{eq:relax} requires much weaker conditions than those required by the original scheme which are stated in Proposition~\ref{cvg}. This is in agreement with our observations in numerical experiments, i.e., in many cases where the original scheme fails to converge, the relaxed scheme \eqref{eq:relax} still converes with a sufficiently small relaxation factor $\alpha$. 

In practice, we can use different relaxation factor $\alpha$ for different iteration steps in \cref{alg:AS}. For example, we can start with a small $\alpha$, and when $M_k$ and $U_k$ are close enough to the fixed point, say, $\max\{\err_M, \err_U\} < 5\epsilon$, we then use a larger $\alpha$, or even set $\alpha = 1$. In practice, this adaptive choice of $\alpha$ is able to further accelerate the convergence of our method.

\subsection{Multiscale algorithm}\label{sec:ms}

How to give a proper initial guess $M_{\mathrm{init}}$ is critical for the \cref{alg:AS}. 
A naive way is to set 
\begin{equation}\label{eq:naive}
  m_i^{n} = m_i^{N_t} = m_T(x_i), 
  \quad n = 0, 1, \ldots, N_t-1, \quad i = 1, 2, \ldots, N.
\end{equation}
However, this initial guess can be far away from the real solution, which can lead to more
alternating sweeping steps and longer computation time, or even failure of convergence.

We address the selection of $M_{\mathrm{init}}$ by using a multiscale method. In the first step, we
solve the equations \cref{eq:nMFG} on the coarsest grid with certain numerical methods (we will
explain this choice more specifically later). At each finer level, an initial guess of the solution
is obtained by interpolating the approximate solution from the previous level. This process is
repeated until the finest grid. In particular, the initial guess on the finest grid
$M_{\mathrm{init}}^{L}$ is given by interpolating $M^{L-1}$, which can be much better than the naive
initial guess \eqref{eq:naive}. The pseudocode of this multiscale method is summarized in
\cref{alg:MS}, where $M^{(\ell)}$, $U^{(\ell)}$ denote the approximate solution of $M$ and $U$ on
the $\ell$-th level grid, respectively.

\begin{algorithm}[h]
  \caption{\label{alg:MS} Multiscale algorithm for MFG}
  \textbf{Input:} Initial guess of the density $M^{(L_0)}_{\init}$ on coarsest grid $\ell=L_0$\\
  \textbf{Output:} Numerical solution $M^{(L)}$ and $U^{(L)}$ of MFG \cref{eq:nMFG} on finest grid level $L$
  \begin{algorithmic}
    \Function{Multiscale}{$M^{(L_0)}$}
    \State Solving \eqref{eq:nMFG} on the $L_0$ grid by a given method with initial guess
    $M_{\mathrm{init}}$ gives $M\sps{L_0}$ and $U\sps{L_0}$
    \For {$\ell$ from $L_0+1$ to $L$}
    \State Interpolating $M^{(\ell-1)}$ by an interpolation method gives $M^{(\ell)}_{\init}$;
    \State $U^{(\ell)}$, $M^{(\ell)}$ =\Call{AlternatingSweeping}{$M^{(\ell)}_{\mathrm{init}}$};
    \EndFor
    \State \textbf{return} $M\sps{L}$ and $U\sps{L}$;
    \EndFunction
  \end{algorithmic}
\end{algorithm}

\begin{remark}
  This algorithm is not a multigrid type method. In each step, one moves from a coarse grid to a
  fine grid and then apply the Alternating Sweeping method, and never goes back to the coarse grid
  after that.
\end{remark}

\paragraph{Numerical method on the coarsest grid}
Numerical results show that, when the multiscale method is used to provide the initial guess, the
alternating sweeping is significantly accelerated and the time spent on the multiscale hierarchy for
constructing the intial guess is negligible compared to the time saved. Since the number of
discretization points on the coarsest grid is relatively small, we are able to employ methods that
are more stable and possibly more expansive. The Newton iteration methods in \cite{achdou2010mean}
is a candidate. Another choice is the alternating sweeping method with a sufficiently small
relaxation factor.

\paragraph{Interpolation method}
For the interpolation method in \cref{alg:MS}, there are many candidates. A simple one is the
linear interpolation
\begin{equation}
  u^{2n+1,(\ell)}_{2i+1} = \frac{1}{4}
  \left(u^{n, (\ell-1)}_{i}+u^{n+1, (\ell-1)}_{i}+u^{n, (\ell-1)}_{i+1}+u^{n+1, (\ell-1)}_{i+1}\right).
\end{equation}
Other interpolation methods can be adopted as well, such as the cubic interpolation or spline
interpolation. Higher order interpolation methods introduce less interpolation error, but at the
cost of increasing computational time.

\subsection{Temporal and spatial discretization}\label{sec:dicr}

It is worth emphasizing that the alternating sweeping method in \cref{alg:AS} and the multiscale
algorithm in \cref{alg:MS} are independent on the specific temporal and spatial discretization used in
\cref{eq:nMFG}. Various finite difference schemes can be applied, and it is also possible to use
other types of methods to solve the HJB equation and KFP
equation: for example, optimal control type methods for solving the equation \eqref{HJB} when fixing the value of $M$. Below we introduce a new second-order finite difference scheme for discretizating the MFG systems.

\subsubsection{Properties of the MFG system}\label{sec:analysis}
In the MFG system \cref{eq:MFG}, the two equations are coupled through the Hamiltonian, the operator
$V[m]$, and the initial and terminal condition. Among them, the coupling through Hamiltonian is
critical. Precisely speaking, the Hamiltonian $H(x, \nabla u): \Omega \times {\bbR}^{d} \rightarrow
\bbR$ is the nonlinear term with respect to $u$ in \eqref{HJB}. Its gradient with respect to $p$,
i.e. $\nabla_pH$ is the nonlinear term in \eqref{KFP}. For the well-posedness of \cref{eq:MFG} in
continuous case, it is essential that
\begin{equation}\label{eq:weak}
  \innerproduct{\nabla \cdot (m\nabla_pH)}{w}{} = -\innerproduct{m\nabla_p H}{\nabla w}{},
  \quad w\in H^1(\Omega)
\end{equation}
holds, where the inner product $\innerproduct{f}{g}{} = \int_{\Omega}f\cdot g\dd x$, both for scalar
functions and vector functions. The linearized Hamiltonian with respect to $p$ is $\nabla_pH(x,
\nabla u) \cdot \nabla u$, and the operator $ u \rightarrow \nabla_pH(x, \nabla u) \cdot \nabla u $ is the adjoint of
the operator $ m \rightarrow - \nabla \cdot (m\nabla_pH(x, \nabla u)) $ because
\begin{equation}\label{eq:adjoint}
  \begin{aligned}
    \innerproduct{-\nabla \cdot (m\nabla_pH(x, \nabla u))}{u}{} &= \innerproduct{m\nabla_p H(x, \nabla u)}{\nabla u}{} = \innerproduct{m}{\nabla_pH(x, \nabla u) \cdot \nabla u }{} \\
    % &= \innerproduct{m}{\nabla_pH(x, \nabla u) \cdot p}{}.
  \end{aligned}
\end{equation}
This property is key to the proof of uniqueness in continuous case \cite{Lasry2007}. 

In the proof of uniqueness of
the solution, we consider any two solutions $\left(u_{1}, m_{1}\right)$ and $\left(u_{2},
m_{2}\right)$, subtract the HJB equations satisfied by them, multiply it by $m_1-m_2$, and then
subtract the KFP equations satisfied by them, multiply it by $u_1-u_2$, and then subtract these two
parts. The resulting expression is the sum of $4$ nonnegative terms, and is equal to $0$, and it
turns out that $\left(u_{1}, m_{1}\right)$ must be the same with $\left(u_{2}, m_{2}\right)$. In
order for this argument to be valid, we must use the convexity of $H$ with respect to $\nabla u$,
and only under the condition \eqref{eq:adjoint} can we obtain the form of the first-order condition
of convexity of $H$. More details can be found in \cite{Lasry2007}.

In \cite{achdou2010mean}, a first-order finite difference scheme is derived using the discrete
analog of \cref{eq:weak} and its well-posedness is proved. Often, first-order schemes enjoy nice properties such as non-negtivity preserving of $M$, etc., but they can be inefficient, as they require small temporal and spatial steps even for relatively low accuracy. This problem can be serious especially in 2-dimensional or 3-dimensional cases. Below, we derive a
second-order scheme, where a discrete version of \cref{eq:weak} is used to handle the coupling
caused by the Hamiltonian. For simplicity, the following derivation assumes the spatial dimension
$d=1$, but one can extend it to multi-dimensional cases without any difficulty. 

\subsubsection{Spatial discretization}

In the following derivation, let $W$ be an arbitrary grid function. The spatial discretization of
the gradient operator $\nabla$ follows the second-order upwind scheme (also called Beam-Warming
scheme, see \cite{Beam1978An}). We introduce two one-sided second-order difference operators
\begin{equation}%\label{eq:upwind}
  (\cD W)_i^- = \frac{3W_{i} - 4W_{i-1} + W_{i-2}}{2h},\quad
  (\cD W)_i^+ = \frac{- 3W_{i} + 4W_{i+1} - W_{i+2}}{2h},
\end{equation}
denote $(\cD W)_i = ( (\cD W)_i^-, (\cD W)_i^+)$, and let the discrete analog of Hamiltonian $H$
(also denoted as $H$ for notational convenience) be
\begin{equation}
  H(x, \nabla u)|_{x_i} \rightarrow   H(x_i, (\cD U)_i^-, (\cD U)_i^+) =  H(x_i, (\cD U)_i).
\end{equation}
The explicit expression of the discrete Hamiltonian $H(x_i, (\cD U)_i)$ depends on the form of the
original Hamiltonian. For instance, when $H(x, \nabla u) = \phi(x) + |\nabla u|^{\gamma},\gamma\geq
0$,
\begin{equation} \label{eq:gam}
  \cH(x_i, (\cD U)_i) = \phi(x_i) + {\left(\sqrt{\max(-(\cD U)_i^-, 0)^2 + \max( (\cD U)_i^+, 0)^2}\right)}^{\gamma}.
\end{equation}
For the Laplace operator, the three-point central difference is
\begin{equation}\label{eq:laplace}
  (-\Delta u)_i \rightarrow (\frac{1}{\nu}\cL u)_i := \frac{u_{i+1}-2u_{i}+u_{i-1}}{h^2}.
\end{equation}

Recall that in Section \ref{notations} $B_i(m, u)$ is introduced as the discretization of
$\nabla\cdot(m\nabla_p H(x,\nabla u))$. According to the analysis of the Hamiltonian in
\cref{sec:analysis}, we replace this term with the discrete analog of \cref{eq:weak},
\begin{equation}
  \sum_{i = 1}^{N} \cB_i(m,m)W_i = -\sum_{i = 1}^{N} m_i (\nabla_pH(x_i, (\cD u)_i)\cdot (\cD W)_i),
\end{equation}
where $\nabla_pH(x_i, (\cD u)_i)$ denotes the gradient of the discrete Hamiltonian $H$ with respect
to $\cD u$. Comparing the coefficient of $W_i$ on both sides of the equation leads to the expression
of $\cB_i(m,u)$ as
\begin{equation}%\label{eq:secondorderB}
  \begin{aligned}
    \cB_{i}(m,u) &= \frac{1}{2h}\left[{3m_{i}\left(\pd{H}{p_1}\right)_{i} 
      -  4m_{i-1}\left(\pd{H}{p_1}\right)_{i-1}  
      + m_{i-2}\left(\pd{H}{p_1}\right)_{i-2}} \right. \\
      &\qquad\qquad \left.{- 3m_{i}\left(\pd{H}{p_2}\right)_{i} 
      + 4m_{i+1}\left(\pd{H}{p_2}\right)_{i+1} 
    - m_{i+2}\left(\pd{H}{p_2}\right)_{i+2} }\right], \\ 
  \end{aligned}
\end{equation}
where $\frac{\partial H}{\partial p_1}$ is the partial derivative of $H$ with respect to $(\cD
u)^-$, and $\frac{\partial H}{\partial p_2}$ is the partial derivative of $H$ with respect to $(\cD
u)^+$. Finally, a semi-discrete version of MFG with respect to $x$ is given as follows:
\begin{equation}
  \begin{aligned}
    \pd{u_i}{t} &+ (\cL u)_i + H(x, (\cD u)_i) = V[M]_i, \quad i=1,\dots,N,\\
    -\pd{m_i}{t} &+ (\cL m)_i - B_i(m, u) = 0, \quad i=1,\dots,N.
  \end{aligned}
\end{equation}

\subsubsection{Temporal discretization}
The specific expression of spatial discretization is not important in the following discussion. To
simplify the discussion, let us introduce
\begin{subequations}
  \begin{align}
    \cS(u, m)_i &= -(\cL u)_i - H(x, (\cD u)_i) + V[m]_i, \quad i=1,\dots,N,\\
    \cT(u, m)_i &= (\cL m)_i - B_i(m,u), \quad i=1,\dots,N.
  \end{align}
\end{subequations}
Then the Crank-Nicolson scheme can be written as
\begin{subequations}\label{CN}
  \begin{align}
    \frac{u_i^{n+1}-u_i^n}{\tau} &= \frac{1}{2}\left( \cS(u^n,m^n)_i+\cS(u^{n+1},m^{n+1})_i
    \right), \label{HJBcn}\\
    \frac{m_i^{n+1}-m_i^n}{\tau} &= \frac{1}{2}\left( \cT(u^n,m^n)_i+\cT(u^{n+1},m^{n+1})_i
    \right). \label{KFPcn}
  \end{align}
\end{subequations}
Together with the initial condition and terminal condition
\begin{subequations}
  \begin{align}
    u_i^0 &= u_0(x_i)+V_0[m^0]_i, \quad i = 1, 2, \ldots, N,  \\ 
    m_i^{N_t} &= m_T(x_i), \quad i = 1, 2, \ldots, N,
  \end{align}
\end{subequations}
the equation \cref{eq:MFG} is fully discretized as follows 
\begin{subequations}\label{MFGfd}
  \begin{align}
    &\begin{aligned}\label{HJBfd}
       \frac{u_i^{n+1}-u_i^n}{\tau} &=  \frac{1}{2}(- {(\cL (u^n + u^{n+1}))}_i - H(x_i, {(\cD u^n)}_i) - H(x_i, {(\cD u^{n+1})}_i) + V[m^n]_i+V[m^{n+1}]_i),\\
       u_i^0 &= u_0(x_i)+V_0[m^0]_i, \quad 
       i = 1, \ldots, N, n = 0, 1, 2, \ldots, N_t - 1, \\ 
     \end{aligned}\\
    &\begin{aligned} \label{KFPfd}
       \frac{m_i^{n+1}-m_i^{n}}{\tau} &= \frac{1}{2}({(\cL (m^n + m^{n+1}))}_i - B_i(m^n, u^n) - B(m^{n+1}, u^{n+1})), \\
       m_i^{N_t} &= m_T(x_i), \quad 
       i = 1, \ldots, N, n = 0, 1, 2, \ldots, N_t - 1. \\
     \end{aligned}
  \end{align}
\end{subequations}

The following propositions summarize the properties satisfied by this dicretization scheme.
\begin{proposition}\label{prop:order}
  Assume that the real solution $u$ and $m$ are at least fourth-order differentiable and the
  operator V is defined pointwise, then the local truncation error of scheme \eqref{MFGfd} is
  $O(\tau^2 + h^2)$.
\end{proposition}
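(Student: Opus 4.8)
The plan is to substitute the exact solution $u(x,t)$ and $m(x,t)$ into the fully discrete scheme \eqref{MFGfd}, move all terms to one side, and Taylor-expand every term to identify the residual, i.e.\ the local truncation error. Because the temporal and spatial discretizations are applied in a tensor-product fashion, I would split the analysis into a spatial part (showing each spatial operator is second-order consistent) and a temporal part (showing that the Crank--Nicolson averaging is second-order consistent about the midpoint $t_{n+1/2}$), and then combine the two estimates.

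First I would verify the second-order consistency of the spatial operators. For a smooth grid function sampled from $u$, a direct Taylor expansion about $x_i$ gives
\[
(\cD u)_i^- = u_x(x_i) + O(h^2), \qquad (\cD u)_i^+ = u_x(x_i) + O(h^2),
\]
since the one-sided stencils $\frac{1}{2h}(3,-4,1)$ and $\frac{1}{2h}(-3,4,-1)$ are exactly the second-order Beam--Warming coefficients. The three-point central difference in \eqref{eq:laplace} is second-order accurate for the second derivative, so $(\cL u)_i$ matches its intended continuous counterpart $-\nu\Delta$ up to $O(h^2)$. For the Hamiltonian term I would use the smoothness of $H$ together with the chain rule: since each argument $(\cD u)_i^{\pm}$ equals $u_x(x_i)$ up to $O(h^2)$, the composition satisfies $H(x_i,(\cD u)_i^-,(\cD u)_i^+) = H(x_i,u_x(x_i)) + O(h^2)$. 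For the coupling term I would observe that $\cB_i(m,u)$ is precisely the sum of the one-sided operators applied to the products $m\,\partial_{p_1}H$ and $m\,\partial_{p_2}H$, namely $\cB_i(m,u) = (\cD(m\,\partial_{p_1}H))_i^- + (\cD(m\,\partial_{p_2}H))_i^+$, so the same second-order consistency and the chain rule (using $\partial_{p_1}H + \partial_{p_2}H = \partial_pH$ when both arguments equal $u_x$) give $\cB_i(m,u) = \partial_x\big(m\,\partial_pH(x,u_x)\big)(x_i) + O(h^2)$, the discrete analog of \eqref{eq:weak}. Since $V$ is defined pointwise, $V[m]_i$ introduces no additional spatial error. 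Hence the semi-discrete operators $\cS$ and $\cT$ reproduce their continuous counterparts up to $O(h^2)$.

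Next I would treat the temporal Crank--Nicolson discretization. Expanding $u^n$ and $u^{n+1}$ about the midpoint $t_{n+1/2}$ gives $\frac{u_i^{n+1}-u_i^n}{\tau} = u_t(x_i,t_{n+1/2}) + O(\tau^2)$, and symmetrically the average $\frac12(\cS(u^n,m^n)_i + \cS(u^{n+1},m^{n+1})_i) = \cS(u,m)(x_i,t_{n+1/2}) + O(\tau^2)$, because the midpoint average of a smooth function equals its midpoint value up to a second-order error. The same reasoning applies to \eqref{KFPfd}. Combining the temporal expansions with the $O(h^2)$ spatial residual from the previous step, the leading $O(\tau)$ and $O(h)$ contributions cancel against the PDE, and the residual of \eqref{MFGfd} is $O(\tau^2 + h^2)$. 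The initial and terminal conditions are imposed exactly (again using that $V_0$ is pointwise), so they contribute no truncation error.

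The main obstacle I anticipate is the nonlinear, and potentially nonsmooth, dependence of the discrete Hamiltonian on $(\cD u)^{\pm}$, for instance the $\max(\cdot,0)$ operations in the model case \eqref{eq:gam}. The Taylor expansions of $H$ and of $\partial_{p_1}H,\partial_{p_2}H$ require $H$ to be smooth in its momentum arguments at the value $\nabla u(x_i)$; at points where an upwind branch switches, where $(\cD u)^{\pm}$ changes sign, the map is only Lipschitz and the clean $O(h^2)$ expansion can fail on a lower-dimensional set. I would therefore state the consistency estimate under the assumption that $H$ is $C^2$ in $p$ along the exact solution away from such switching points, so that the chain-rule expansions of $H(x_i,(\cD u)_i)$ and of the products $m\,\partial_{p_j}H$ are justified; the assumed fourth-order differentiability of $u$ and $m$ then supplies the remainder bounds needed to close every expansion at order $O(\tau^2 + h^2)$.
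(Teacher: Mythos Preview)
Your proposal is correct and follows essentially the same approach as the paper's proof: Taylor-expand each discrete operator about the exact solution and collect the residuals. In fact your argument is more complete than the paper's, which does not explicitly treat the $\cB_i$ term (it simply asserts that ``the analysis for the Fokker--Planck equations is the same''); your observation that $\cB_i(m,u) = (\cD(m\,\partial_{p_1}H))_i^- + (\cD(m\,\partial_{p_2}H))_i^+$ together with $\partial_{p_1}H+\partial_{p_2}H = \partial_pH$ on the diagonal is exactly what is needed there, and your caveat about the nonsmoothness of the numerical Hamiltonian at upwind switching points is a genuine issue that the paper does not address.
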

\begin{proposition}\label{prop:mass}
  The total mass $ h\sum_{i}{m_i} $ is conserved in scheme \eqref{MFGfd}.
\end{proposition}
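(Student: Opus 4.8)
The plan is to sum the discrete KFP equation \eqref{KFPfd} over all spatial grid indices and show that the entire right-hand side collapses to zero, leaving $\sum_i m_i^{n+1} = \sum_i m_i^n$ for every $n$; multiplying by $h$ and inducting on $n$ then gives conservation of the total mass $h\sum_i m_i$ across all time levels. Concretely, summing \eqref{KFPfd} over $i = 1,\dots,N$ turns the left-hand side into $\frac{1}{\tau}\sum_i (m_i^{n+1}-m_i^n)$, so it suffices to prove the two identities $\sum_i (\cL W)_i = 0$ and $\sum_i B_i(m,u) = 0$ for arbitrary grid functions $W$, $m$, $u$ under the periodic boundary condition.

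The first identity is elementary: by \eqref{eq:laplace}, $(\cL W)_i$ is a scalar multiple of $W_{i+1}-2W_i+W_{i-1}$, and summing over $i$ while using periodicity to reindex gives $\sum_i W_{i+1} = \sum_i W_{i-1} = \sum_i W_i$, so the three contributions cancel. The second identity is the substantive one, and I would establish it by exploiting the discrete weak form through which $B_i$ was defined. Taking the constant test grid function $W \equiv 1$ in the defining relation $\sum_i \cB_i(m,u)\,W_i = -\sum_i m_i\left(\nabla_p H(x_i,(\cD u)_i)\cdot(\cD W)_i\right)$, one checks directly that both one-sided second-order operators annihilate constants, since $(\cD W)_i^- = \frac{3-4+1}{2h}=0$ and $(\cD W)_i^+ = \frac{-3+4-1}{2h}=0$. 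The right-hand side therefore vanishes and $\sum_i B_i(m,u) = \sum_i \cB_i(m,u)\cdot 1 = 0$ for all $m$ and $u$.

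I expect the main difficulty to be conceptual rather than computational: the point is to recognize that mass conservation is already encoded in the construction of the scheme. The term $B_i$ was deliberately defined as the discrete analog of the integration-by-parts identity \eqref{eq:weak}, and it is precisely this divergence structure—together with the fact that the Beam--Warming operator $\cD$ exactly reproduces zero on constants—that forces the coupling term to sum to zero. Once the correct test function $W\equiv 1$ is identified, the remaining steps are routine, and the Crank--Nicolson time averaging plays no role beyond distributing the already-vanishing spatial sums across the two time levels.
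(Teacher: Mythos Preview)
Your proposal is correct and follows essentially the same route as the paper's proof: sum the discrete KFP equation, kill the Laplacian contribution (the paper phrases this as $\langle \cL(m^n+m^{n+1}),1\rangle_h = \langle \cD_0(m^n+m^{n+1}),\cD_0 1\rangle_h = 0$, while you telescope directly, but the content is identical), and then eliminate the transport term by plugging the constant test function $W\equiv 1$ into the defining discrete weak form of $B$ and observing $(\cD 1)_i^\pm = 0$. Your remark that the Crank--Nicolson averaging plays no structural role is also in line with the paper's argument.
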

The proof of these two propositions are given in \cref{proof:order,proof:mass}, respectively.

\subsubsection{Solvers}
The discretization \eqref{MFGfd} derived above can be used to derive the solvers of the HJB equation
and KFP equation for the MFG. The discrete KFP equation \eqref{KFPfd} is linear with respect to $M$,
if $U$ is fixed, but the HJB equation is nonlinear with respect to $U$ due to the 
Hamiltonian $H$. However, by means of an inner iteration, the HJB equation can be reduced to a linear problem as well. In the inner iteration that solves for $u^{n+1}$, we use $u^n$ as the initial guess and solve the linearized (with respect to $U$) version of \eqref{HJBfd}. Notice that the solution is usually smooth with respect to time $t$, so the inner iterations can be effectively carried out by a Newton type inner iteration. Hence, we have the solvers for HJB and KFP in \cref{alg:HJB,alg:KFP}, respectively.

\begin{algorithm}[ht]
  \caption{Solving discrete HJB \eqref{HJBfd}}
  \label{alg:HJB}
  \textbf{Input:} A guess of the mass density $M=(m_i^n)$\\
  \textbf{Output:} Solution $U$ of the HJB equation \eqref{HJBfd}
  \begin{algorithmic}
    \Function{SolveHJB}{$M$}{}
    \State $u_i^0 \leftarrow u_0(x_i), \quad i = 1, 2, \ldots, N$
    \For {$n$ from $0$ to $N_t-1$}
    \State $Z^0 \leftarrow u^n$,  $k \leftarrow 0$
    \Do
    \State Evaluate $Z^{k+1}$ by solving the linear system: 
    \begin{align*}
    \qquad& \left(\cI + \frac{\tau \cL}{2} + \frac{\tau}{2}\pd{H}{U}(Z^k)\right)  Z^{k+1} =
    \left(\cI - \frac{\tau \cL}{2}\right)u^n \\
    &\qquad\qquad\qquad + \frac{\tau}{2}
    \left(V[m^n] + V[m^{n+1}]-H(x, \cD u^n)-H(x, \cD Z^k)+\pd{H}{U}(Z^k)Z^k\right)
    \end{align*}
    \State $k \leftarrow k+1$
    \doWhile{$\|Z^{k+1} - Z^{k} \| / \|Z^{k}\| < \epsilon $}
    \State $u^{n+1} \leftarrow Z^{k+1}$
    \EndFor
    \State \textbf{return} $U=(u_i^{n})$
    \EndFunction
  \end{algorithmic}
\end{algorithm}

\begin{algorithm}[ht]
  \caption{Solving discrete KFP \eqref{KFPfd}}
  \label{alg:KFP}
  \textbf{INPUT:} A guess of the value function $U$\\
  \textbf{OUTPUT:} Solution $M$ of the KFP equation \eqref{KFPfd} with given $U$
  \begin{algorithmic}
    \Function{SolveKFP}{$U$}
    \State $m^{N_t} \leftarrow m_T(x_i), \quad i = 1, 2, \ldots, N$,
    \For {$n$ from $N_t$ to $1$ by $-1$}
    \State Evaluate $m^{n-1}$ by solving the linear system: 
    \[
      \left(\cI + \frac{\tau \cL}{2} + \frac{\tau}{2}{\left(\pd{H}{U}(u^n)\right)}^T\right)m^{n-1} = 
      \left(\cI - \frac{\tau \cL}{2} - \frac{\tau}{2}{\left(\pd{H}{U}(u^{n+1})\right)}^T\right)m^{n}
    \]
    \EndFor
    \State \textbf{return} $M$
    \EndFunction
  \end{algorithmic}
\end{algorithm}

\begin{remark}\label{re:inner}
  In the inner iteration of \cref{alg:HJB}, $u^n$ is chosen as the initial value for solving
  $u^{n+1}$. 
  If the solution $u$ is smooth with respect to $t$, the difference $u^{n+1}-u^n$ is small, thus the
  number of inner iteration in \cref{alg:HJB} is small. Numerical tests show that the average number
  of inner iterations in \cref{alg:HJB} is approximately $2$.
\end{remark}

\section{Numerical results}\label{sec:application}

This section presents several numerical examples for one-dimensional and two-dimensional cases to
illustrate the efficiency of the proposed algorithms.

\subsection{One-dimensional case}
Following the numerical setup in \cite{achdou2010mean}, the following Hamiltonian is used for
one-dimensional numerical tests:
\begin{equation}\label{eq:1dsetting}
  H(x, \nabla u) = -200\cos(2\pi x) + 10\cos(4\pi x) + |\nabla u|^{\gamma},
\end{equation}
where $\gamma \geq 0$. A greater $\gamma$ indicates stronger nonlinearity, and $\gamma=2$ corresponds
to the usual form of energy.
In all numerical tests, we set $V_0[m(t=0)](x)=0$ unless specifically stated.

\subsubsection{First-order scheme vs second-order scheme}
Let us first compare the performance of the first-order scheme and the second-order scheme.
The initial-terminal conditions are chosen as:
\begin{subequations}\label{eq:1dbd}
  \begin{align}
    u_0(x) &= \sin(4\pi x)+0.1\cos(10\pi x), \quad x \in [0, 1], \\
    m_T(x) &= 1 + \frac{1}{2}\cos(2\pi x), \quad x \in [0, 1].
  \end{align}
\end{subequations}
The potential is set as $V[m](x) = m(x)^2$, and the end time is $T = 0.01$. We set the coefficient
$\nu=0.4, \ \gamma=2$, and the relaxation factor $\alpha=1$. 

The multiscale method starts from the level $L_0 = 4$ and stops at the level $L = 13$, i.e. the size
of the finest grid is $N\sps{L} = N_t^{(L)} = 8192$. We comment here that $N$ and $N_t$ are not required to be equal. In practice, we can use different $N$ and $N_t$, and the results are similar to what we represent here. The tolerance $\epsilon$ in \cref{alg:AS} is
set as $\epsilon=1\times{10}^{-6}$ while in \cref{alg:HJB}, we set $\epsilon=1\times{10}^{-7}$.  In
all the algorithms, the norm is chosen as the $L^2$ norm.
The solution of $U$ and $M$ on the grid $N^{(10)}=N_t^{(10)}=1024$ is shown in \cref{fig:Sol1}.
One can see that in this case, the value of $U$ (indicating the negative utility) near $x=0.88$ and
$x=0.33$ is smaller at the end of the evolution, and the density of agents gathers towards the
places relatively more desirable.

\begin{figure}[ht]
  \centering
  \subfloat[Profile of $U$]{
    \includegraphics[width=0.45\textwidth]{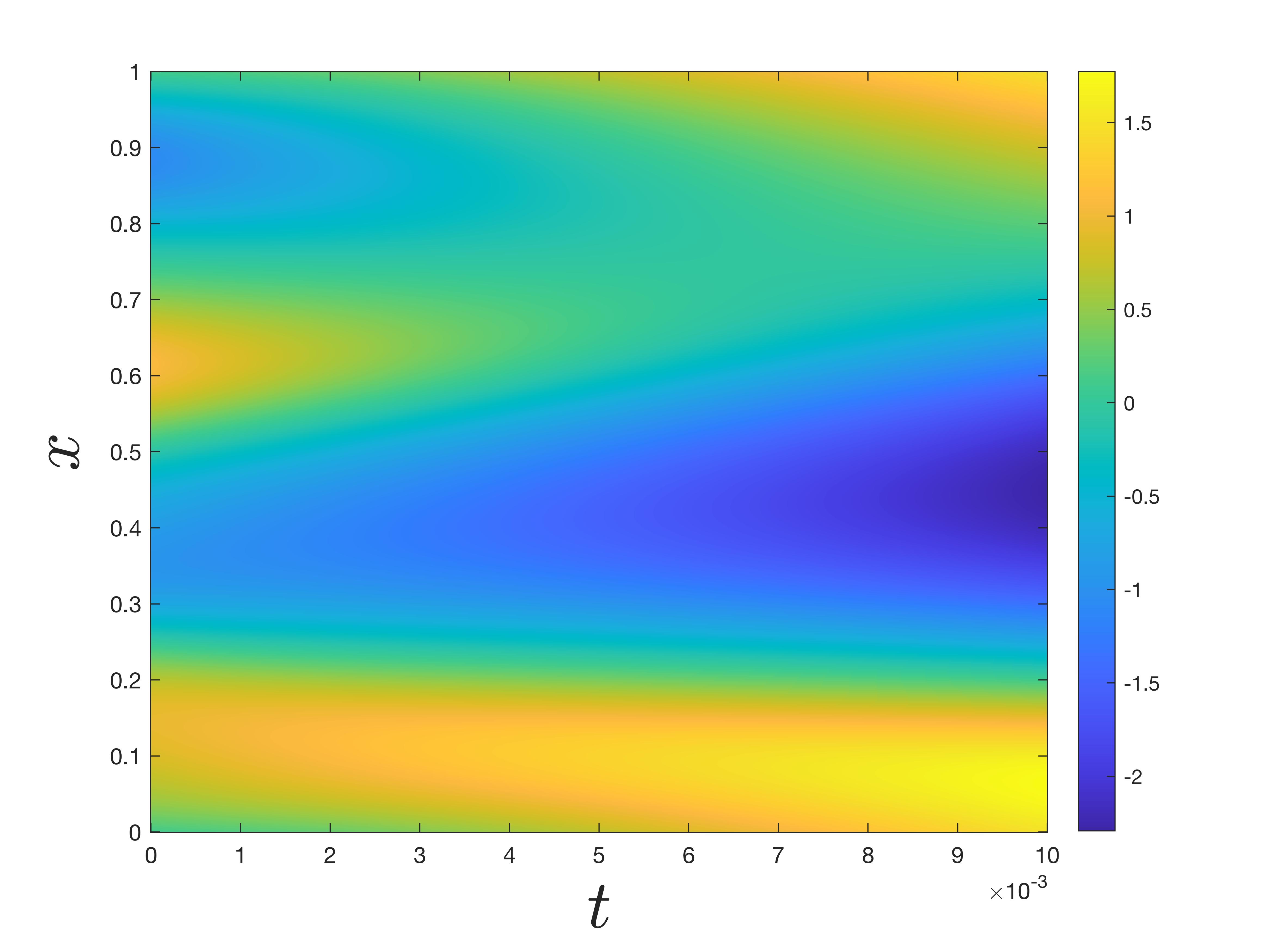}
  }
  \subfloat[Profile of $M$]{
    \includegraphics[width=0.45\textwidth]{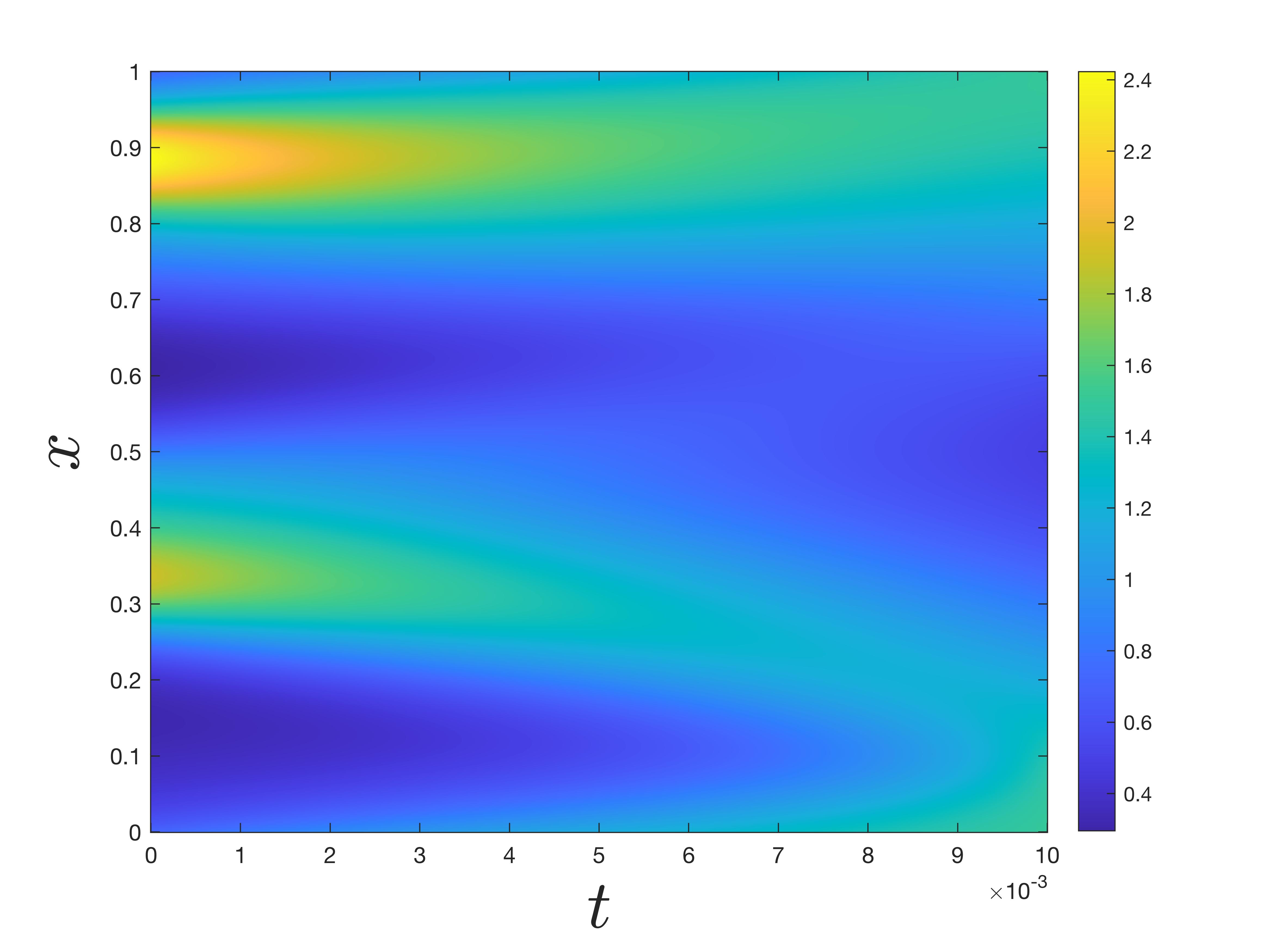}
  }
  \caption{\label{fig:Sol1}
  Profile of the solutions on the grid $N^{(10)}=N_t^{(10)}=1024$. }
\end{figure}

The convergence order can be checked by calculating the relative error of numerical solutions on
each grid in comparison to the solution on the finest grid. The numerical results are summarized in
\cref{tab:mfg1d_err}. The results show that the second-order scheme derived in this paper truly
gives rise to second-order convergence: it approaches the true solution much faster than the
first-order scheme, as expected.

\begin{table}[ht]
    \centering
    \begin{tabular}{ccccc}
        \hline
        $\ell$ & $\err_U, \err_M$(1st) & Order      & $\err_U, \err_M$(2nd) & Order      \\ \hline\hline
        4      & 5.2E-2, 9.1E-2        & ---        & 5.5E-2, 1.1E-1        & ---        \\ \hline
        5      & 3.0E-2, 5.4E-2        & 0.80, 0.77 & 1.9E-2, 3.9E-2        & 1.57, 1.54 \\ \hline
        6      & 1.6E-2, 3.0E-2        & 0.88, 0.85 & 5.3E-3, 1.1E-2        & 1.80, 1.81 \\ \hline
        7      & 8.5E-3, 1.5E-2        & 0.94, 0.92 & 1.4E-3, 2.9E-3        & 1.92, 1.92 \\ \hline
        8      & 4.3E-3, 8.0E-3        & 0.99, 0.97 & 3.6E-4, 7.3E-4        & 1.97, 2.00 \\ \hline
        9      & 2.1E-3, 3.9E-3        & 1.03, 1.02 & 9.0E-5, 1.8E-4        & 1.99, 2.00 \\ \hline
        10     & 9.9E-4, 1.9E-3        & 1.09, 1.09 & 2.2E-5, 4.5E-5        & 2.01, 2.01 \\ \hline
        \hline
    \end{tabular}
    \caption{\label{tab:mfg1d_err}The convergence order of $U$ and $M$.}
\end{table}

We check next the efficiency of inner iteration applied in \cref{alg:HJB} by counting the average
number of iterations per time step. The average numbers of iterations per time step on each grid are
shown in \cref{table:results_1D_iteration} and the result justifies \cref{re:inner}. In addition,
among all time steps, the maximum of the difference between the total mass and $1$ is
$3.00\times10^{-15}$ on the grid of size $N^{(10)}=N_t^{(10)}=1024$, which verifies the conservation
of total mass proved in \cref{prop:mass}.

\begin{table}[h]
  \begin{center}
    \begin{tabular}{ccccccccccc}
      \hline 
      $\ell$                            & 4    & 5    & 6    & 7    & 8    & 9    & 10   & 11   & 12   & 13   \\ \hline
      inner iterations / time step(1st) & 3.00 & 3.00 & 2.17 & 2.00 & 2.00 & 2.00 & 2.00 & 2.00 & 2.00 & 2.00 \\ \hline
      inner iterations / time step(2nd) & 3.00 & 3.00 & 2.20 & 2.00 & 2.00 & 2.00 & 2.00 & 2.00 & 2.00 & 2.00 \\ \hline
    \end{tabular}
  \end{center}
  \vspace{-.5cm}
  \caption{ Number of inner iterations per time step in \cref{alg:HJB}.} 
  \label{table:results_1D_iteration}
  \vspace{-.3cm}
\end{table}

\subsubsection{Alternating sweeping vs. Newton}\label{sec:ASvsNewton}
Let us now compare the computation time of the alternating sweeping without multiscale and the
Newton method presented in \cite{achdou2010mean} to demonstrate the acceleration of
alternating sweeping.

The same Hamiltonian in \eqref{eq:1dsetting} is used with the initial-terminal conditions 
\begin{subequations}\label{eq:1dbd2}
  \begin{align}
    u_0(x) &= \cos(2\pi x), \quad x \in [0, 1], \\
    m_T(x) &= 1 + \frac{1}{2}\cos(2\pi x), \quad x \in [0, 1].
  \end{align}
\end{subequations}
We also set $V[m](x) = m(x)^2$, $T=1$, $\nu=1$, $\gamma=2$, and the relaxation factor $\alpha=1$.

By taking appropriate stop criteria to get solutions of similar accuracy by the two methods, one can
compare the running times used in the computation on the same machine, listed in \cref{tab:ASvsNewton}. To check the
correctness of our numerical solutions, the residual of \eqref{eq:nMFG} are also listed.
\begin{table}[h]
    \centering
    \begin{tabular}{cccccc}
        \hline
        $N$  & $t_{Newton}$(sec) & $t_{AS}$(sec) & $t_{Newton}/t_{AS}$ & $err_{Newton}$ & $err_{AS}$ \\ \hline\hline
        32&4.4E-1&5.5E-2&8&4.3E-4&2.5E-5\\ \hline
        64 &2.0E0&1.1E-1&18&5.3E-4&1.8E-5\\ \hline
        128 &1.0E1&3.1E-1&32&6.7E-4&1.1E-5\\ \hline
        256	&7.2E1&1.0E0&72&2.2E-4&6.1E-6\\ \hline
        512	&7.6E2&3.4E0&224&5.3E-2&3.2E-6\\ \hline
        1024 &---&1.3E1&---&---&1.6E-6\\ \hline
        2048 &---&5.6E1&---&---&8.4E-7\\ \hline
    \end{tabular}
    \caption{\label{tab:ASvsNewton}The acceleration effect of the alternating sweeping and the multiscale method.}
\end{table}

Clearly, the runtime of the Newton method is significantly longer than that for the alternating
sweeping method. The ratio of time used increases rapidly as the size of grid becomes larger, and
alternating sweeping makes computation on large grids possible. When $N=512$, the alternating
sweeping method is $224\times$ faster than the Newton method.
Moreover, the residual of the alternating sweeping method is also relatively smaller than that of
the Newton method.

\begin{figure}[h]
  \centering
  \includegraphics[width=0.4\linewidth]{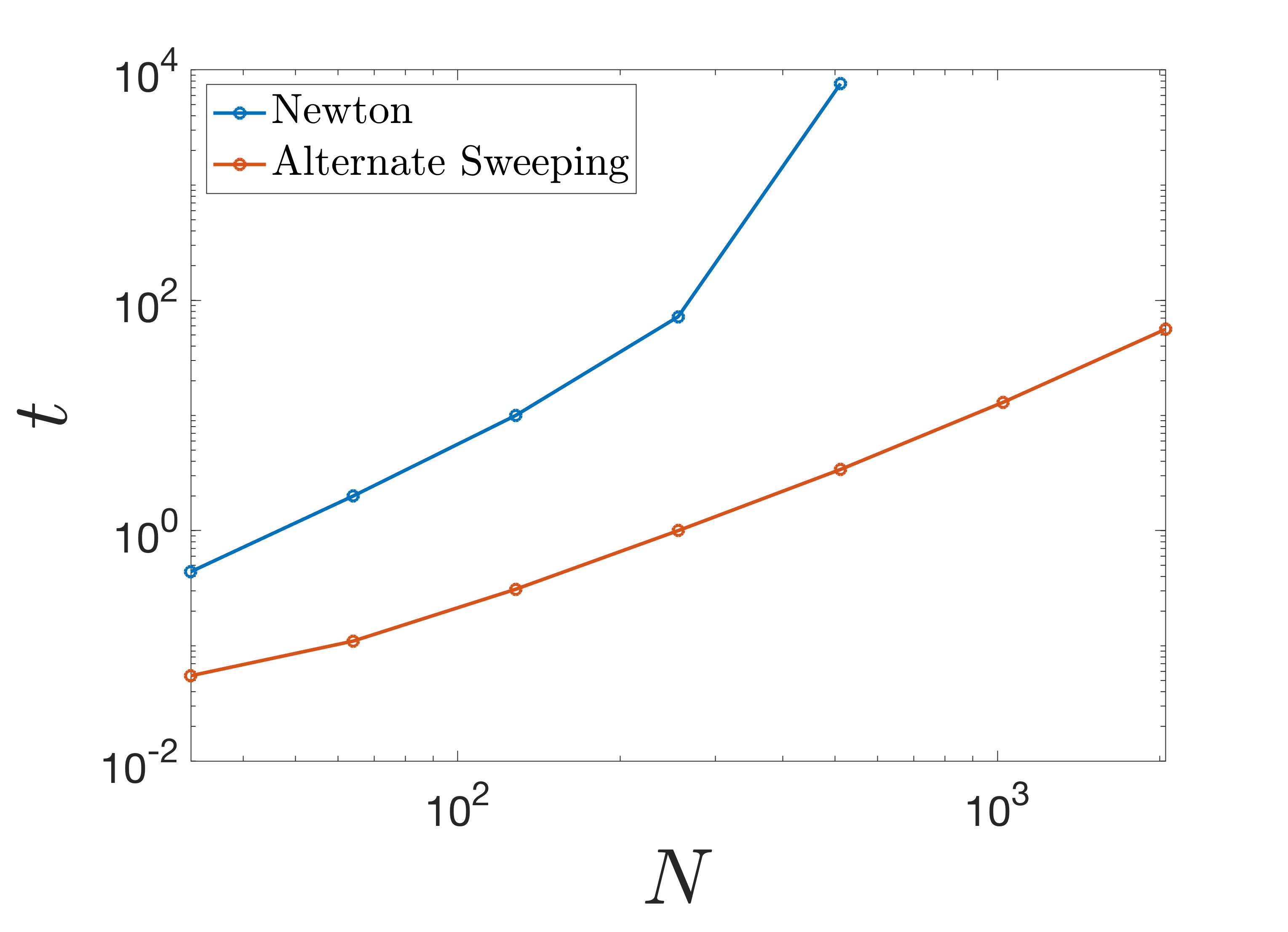}
  \caption{ \label{fig:newtonvsasvsms} Runtime of the  Newton method and alternating sweeping
  algorithm. }
\end{figure}

\subsubsection{Multiscale vs alternating sweeping}

As explained earlier, alternating sweeping only ensures local convergence, and a good initial guess
$M_{\init}$ in \cref{alg:AS} is essential. The following numerical test shows that the initial guess
given by the multiscale algorithm is significantly better than the naive initial guess
\cref{eq:naive}. Often, the algorithm fails to converge with the naive initial guess, while it
converges with the initial guess given by the multiscale method. This justifies using multiscale
method to improve the quality of initial guesses. The same Hamiltonian in \eqref{eq:1dsetting} is
used with the initial-terminal conditions
\begin{subequations}\label{eq:1dbdms}
  \begin{align}
    u_0(x) &= m_0(x)^2 + \sin(2\pi x)+0.1\cos(6\pi x), \quad x \in [0, 1], \\
    m_T(x) &= 1 + \frac{1}{2}\cos(2\pi x), \quad x \in [0, 1],
  \end{align}
\end{subequations}
and with $V[m](x) = m(x)^2$, $T = 0.01$, $\nu = 2$, and $\gamma = 2$. Notice that here the initial
condition of $u$ is not directly given as in other examples, instead it is coupled with $m_0$ via
$V_0[m_0](x) = m_0(x)^2$. This makes the naive initial guess particuarly undesirable.

Our goal is to find the solution on the grid $N^{(10)} = N_t^{(10)} = 1024$. We compare the performance of three different ways to do this.
\begin{enumerate}
\item The multiscale method that starts from a coarse grid (in this example $N^{(5)}=N_t^{(5)}=32$), and use some small $\alpha$ (in this example $\alpha = 0.2*1.1^{\ell-5}$) on the coarse grids ($5\leq l\leq9$), and with $\alpha=1$ on the grid $N^{(10)} = N_t^{(10)} = 1024$. 
\item The alternating sweeping that directly starts from the grid $N^{(10)} = N_t^{(10)} = 1024$ with the naive initial guess \cref{eq:naive} and $\alpha=1$.
\item The alternating sweeping that directly starts from the grid $N^{(10)} = N_t^{(10)} = 1024$ with the naive initial guess \cref{eq:naive} and $\alpha=0.2$.
\end{enumerate}
These three methods correspond to the red curve, the yellow curve, and the purple curve in \cref{fig:msncsry}. 

\begin{figure}[h]
  \centering
  \includegraphics[width=0.7\linewidth]{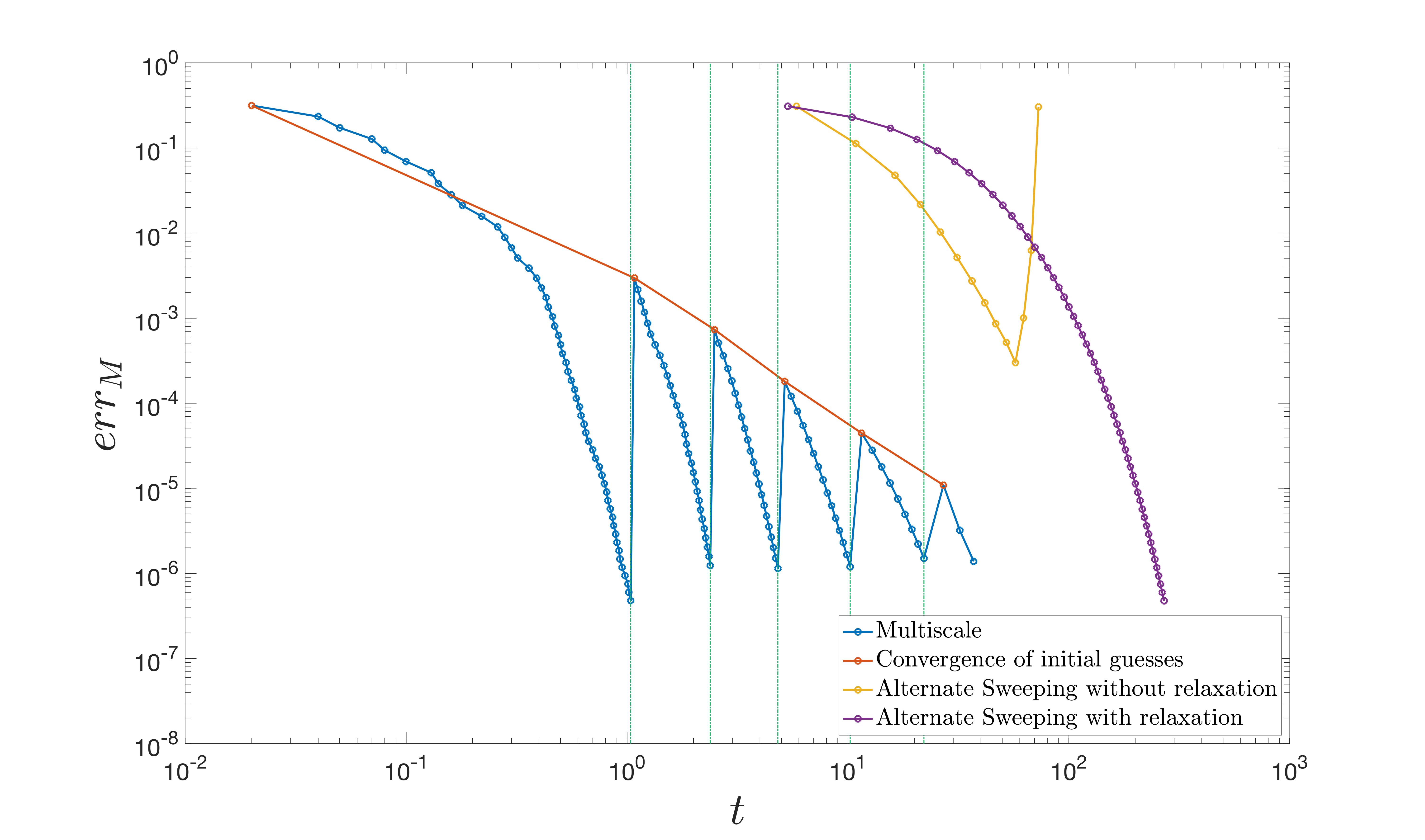}
  \caption{ \label{fig:msncsry} Runtime of the multiscale method and the alternating sweeping
  algorithm.  Each point in the plot denotes the end time of an alternating sweeping step, thus the
plots do not start from $t=0$.}
\end{figure}

In \cref{fig:msncsry}, the red curve shows that the initial guess given by the multiscale method on
each level becomes better and better and finally results in a much better initial guess than
\cref{eq:naive} on the finest mesh. As a result, the computation on the finest mesh $N^{(10)} =
N_t^{(10)} = 1024$ converges in a few iterations. However, the yellow curve fails to
converge. Comparing it with the blue curve, the reason for this failure is that the naive initial
guess is much worse than the initial guess given by the multiscale method used in the blue
curve. The purple curve converges but the time used is $270$ seconds, which is $7.3\times$ the time
used by the multiscale method on the same machine. In conclusion, compared with the alternating
sweeping without relaxation, the multiscale method shows much better convergence; compared with the
alternating sweeping algorithm with relaxation, the multiscale method is significantly faster.

\subsubsection{Performance of relaxation for stronger nonlinearity case}
As discussed in \cref{sec:relax}, one can choose sufficiently small relaxation factor $\alpha$ in
order to improve convergence when faced with significant nonlinearity (for example, when the
$\gamma$ is large in the Hamiltonian in \eqref{eq:gam}). For strong nonlinearity cases, we carry out
numerical tests with different values of $\gamma$ ranges from $3$ to $10$.  The initial-terminal conditions
used are
\begin{subequations}\label{eq:bdgam}
  \begin{align}
    u_0(x) &= \sin(4\pi x)+\frac{1}{2}\cos(10\pi x), \quad x \in [0, 1], \\
    m_T(x) &= 1 + \frac{1}{2}\cos(2\pi x), \quad x \in [0, 1],
  \end{align}
\end{subequations}
and $V[m](x) = m(x)^2$, $T = 0.01$, $\nu = 2$, $\alpha = 0.1$. The results are presented in Figure
\ref{fig:Solgam}. As the evolution of the density of agents is more visually intuitive, we only
present the solutions of $M$ here. By checking the local truncation error, we verified that all
computations converge.

\begin{figure}[h]
  \centering
  \subfloat[$\gamma=3$]{ \includegraphics[width=0.24\textwidth]{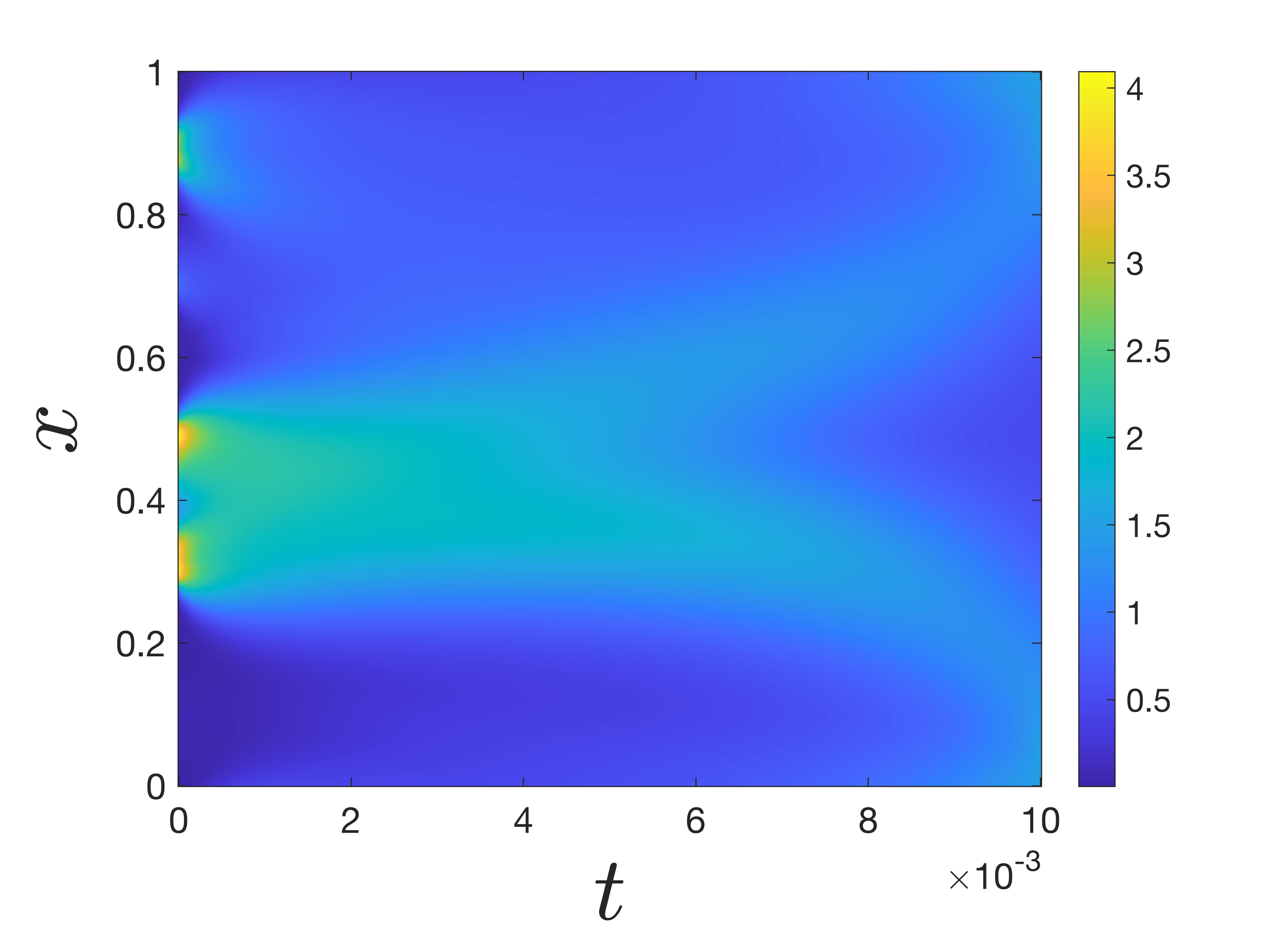} }
  \subfloat[$\gamma=4$]{ \includegraphics[width=0.24\textwidth]{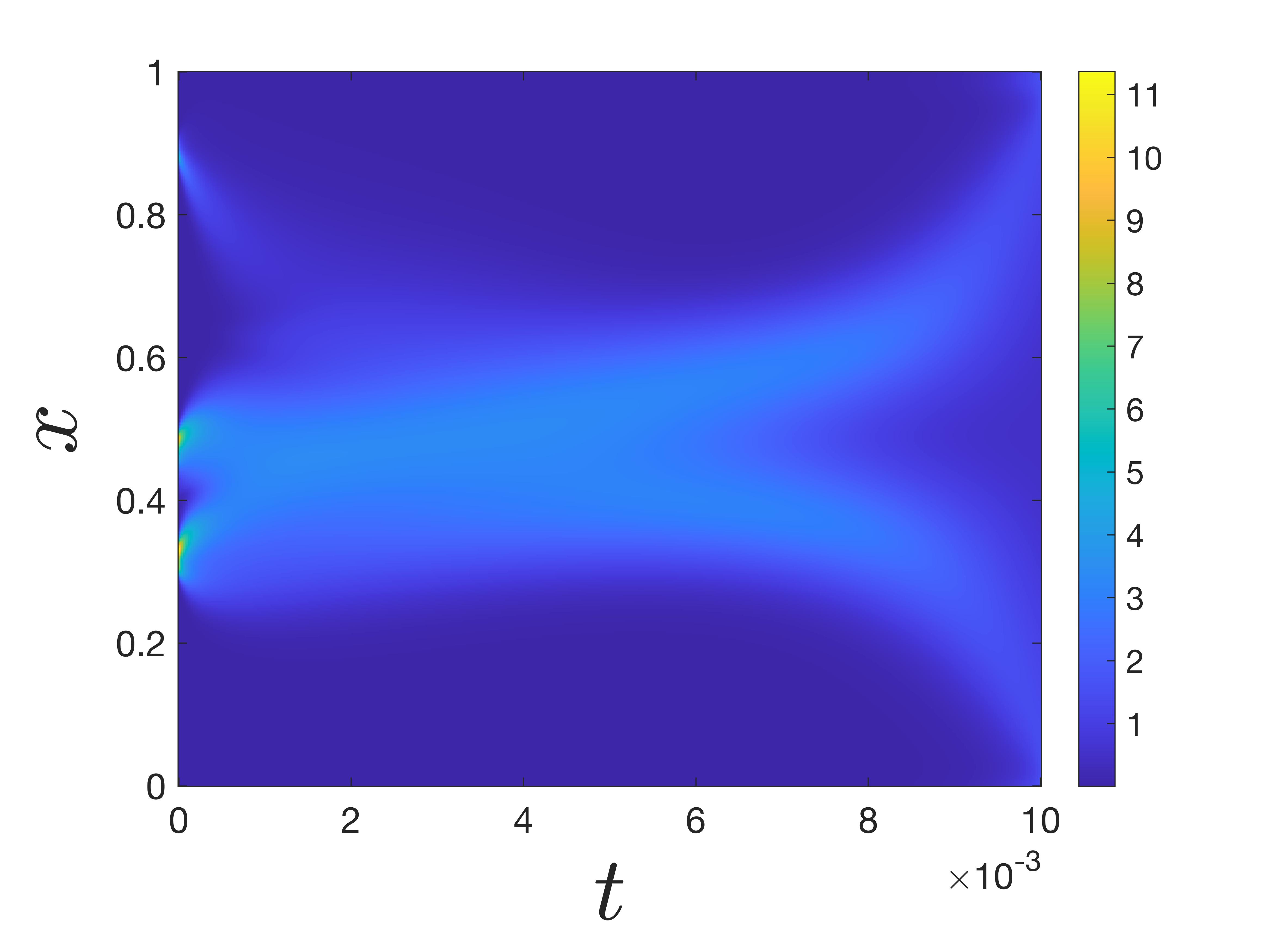} }
  \subfloat[$\gamma=5$]{ \includegraphics[width=0.24\textwidth]{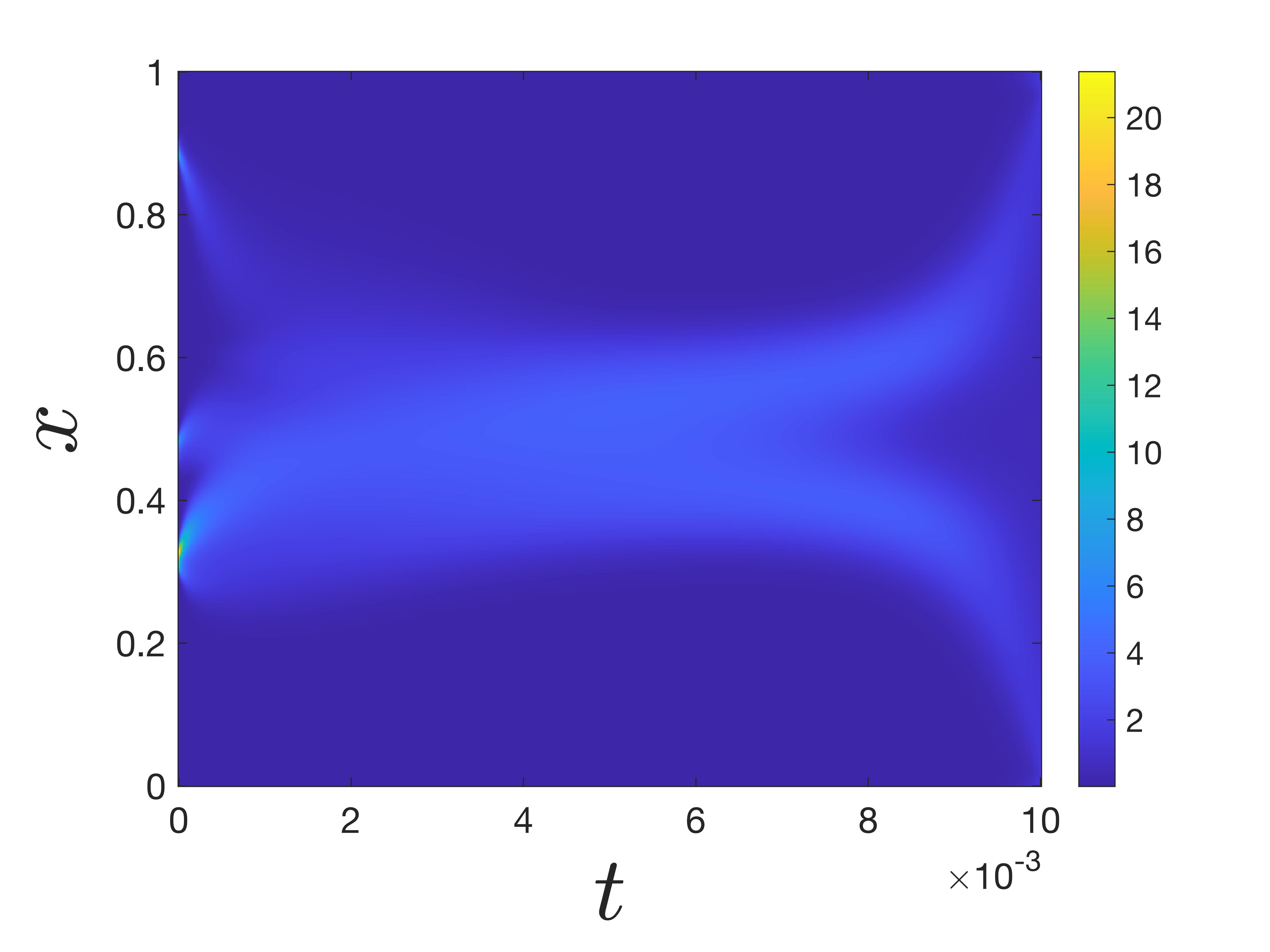} }
  \subfloat[$\gamma=10$]{ \includegraphics[width=0.24\textwidth]{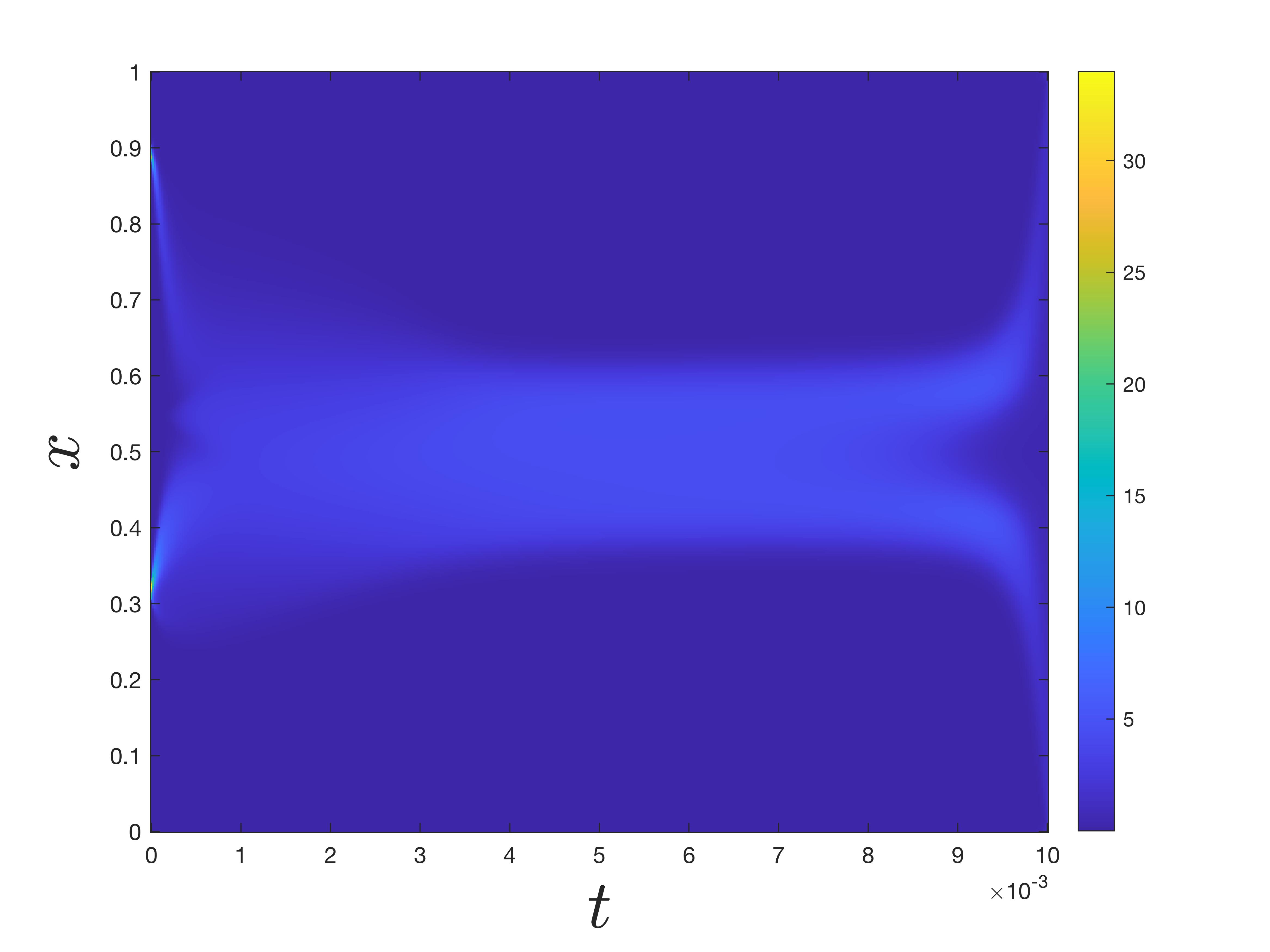} }
  \caption{\label{fig:Solgam} Profile of the solution $M$ for different $\gamma$.}
\end{figure}

\subsubsection{Performance of relaxation for the weak randomness case}

The Laplacian term in \eqref{eq:MFG} is related to the stochastic behavior of agents in, for
example, pedestrian dynamics. The deterministic limit is obtained by letting the coefficient $\nu$
go to $0$ and thus the case with small $\nu$ is quite important. When the coefficient of the Laplace
operator is small, the nonlinear term is relatively large, thus the computation usually becomes more
difficult. Similar to the cases with large $\gamma$, the computation in this case typically requires
using small relaxation factor $\alpha$. In the following tests we use the same initial-terminal conditions
as in \eqref{eq:bdgam} with $V[m](x) = m(x)^2$, $T = 0.01$, and $\gamma = 2$. Figure \ref{fig:Solnu}
summarizes the results of $\nu = 0.2, 0.1, 0.05, 0.02$, with the corresponding $\alpha$ equal to
$0.5, 0.2, 0.1, 0.05$. The result shows the process of approaching deterministic limit.

\begin{figure}[h]
  \centering
  \subfloat[$\nu = 0.2 $]{ \includegraphics[width=0.24\textwidth]{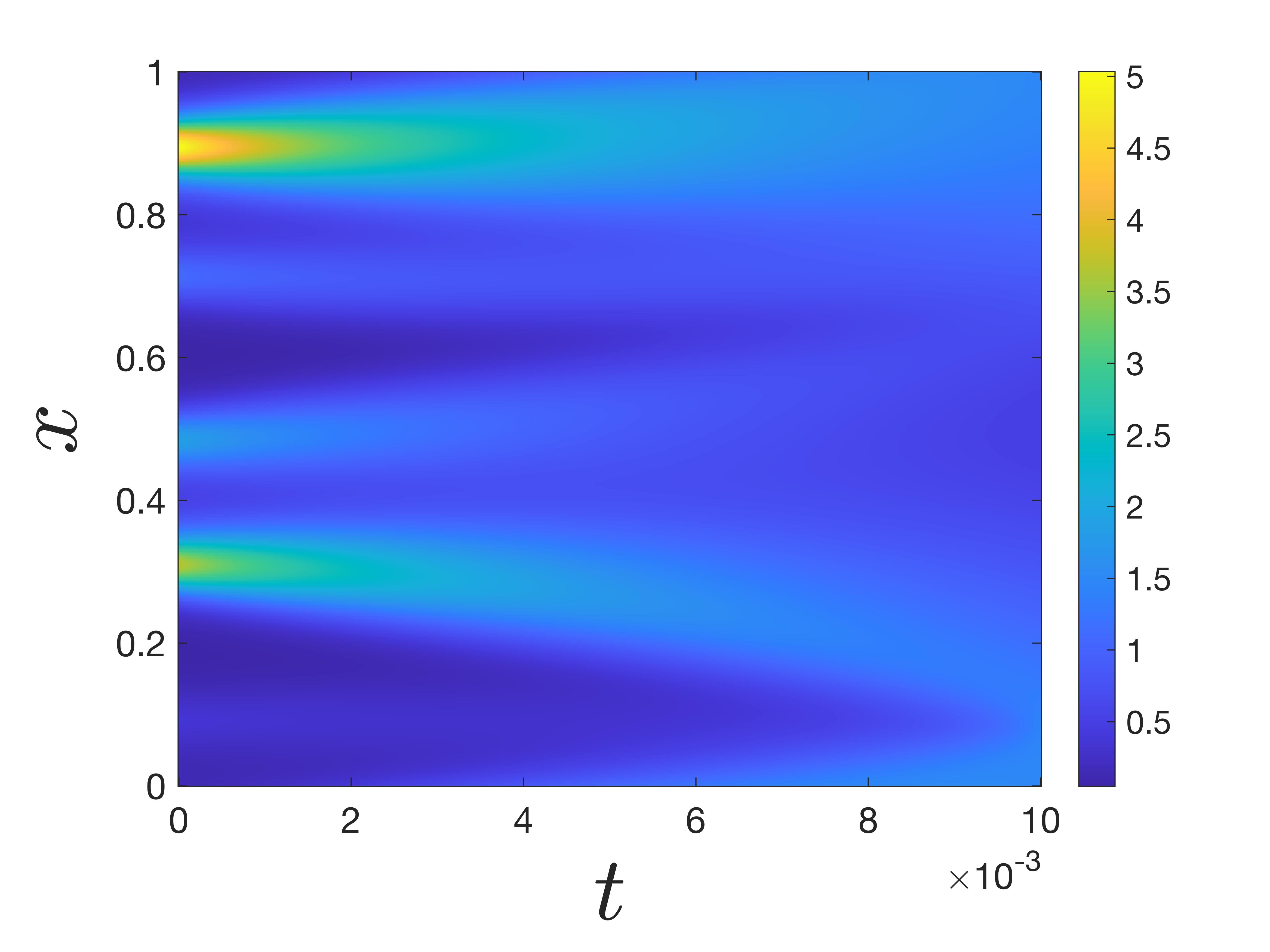} }
  \subfloat[$\nu = 0.1 $]{ \includegraphics[width=0.24\textwidth]{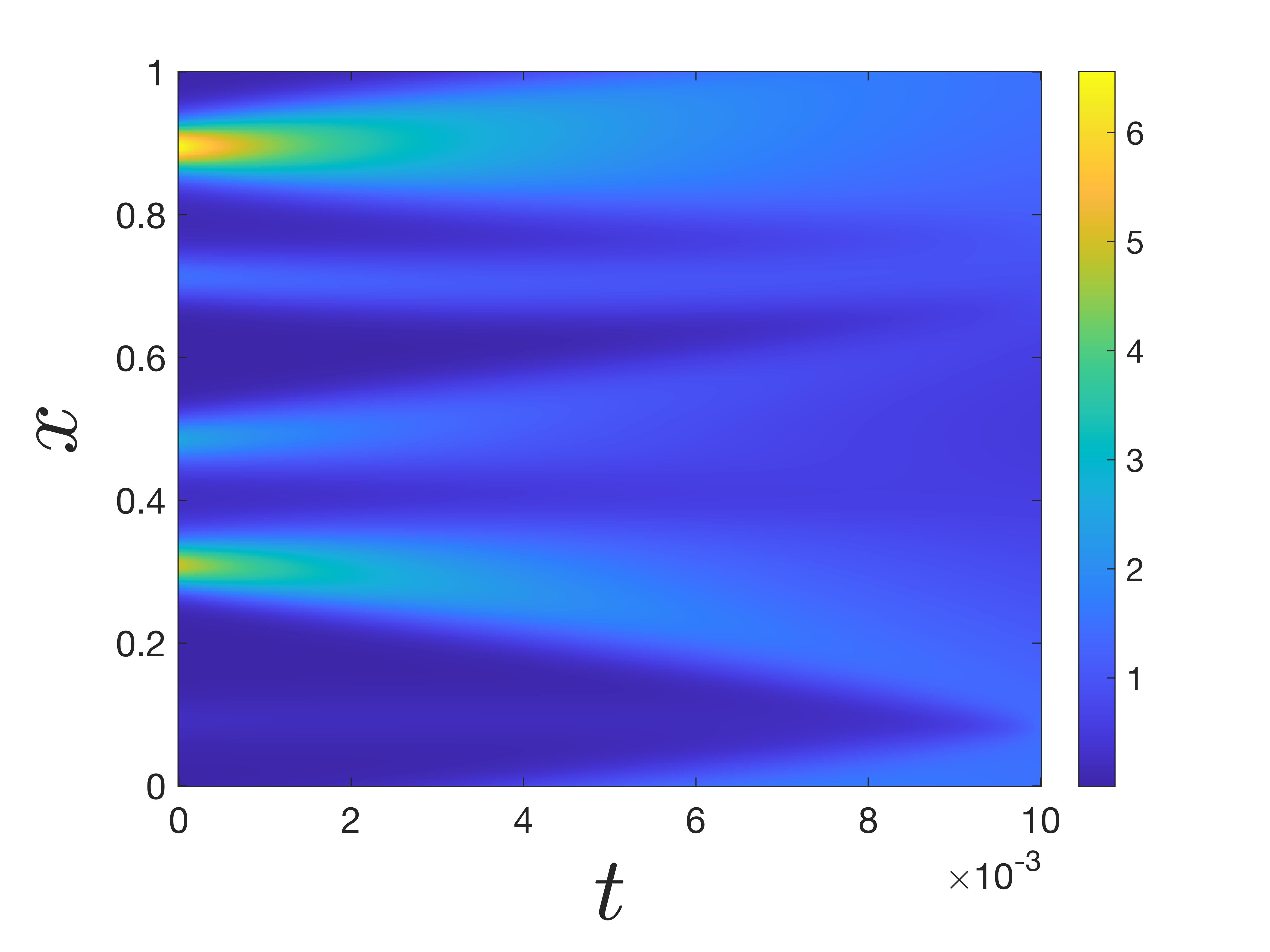} }
  \subfloat[$\nu = 0.05$]{ \includegraphics[width=0.24\textwidth]{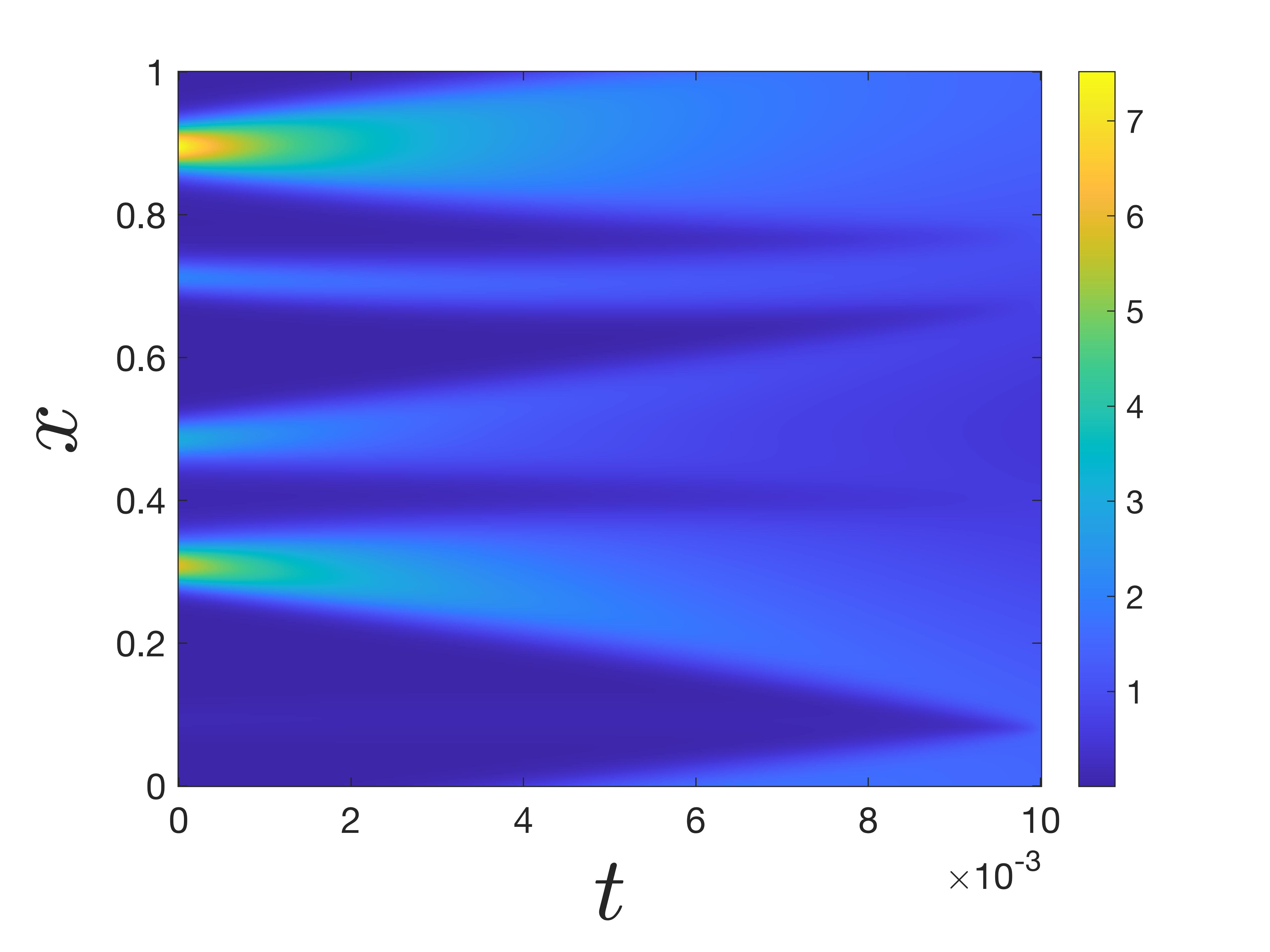} }
  \subfloat[$\nu = 0.02$]{ \includegraphics[width=0.24\textwidth]{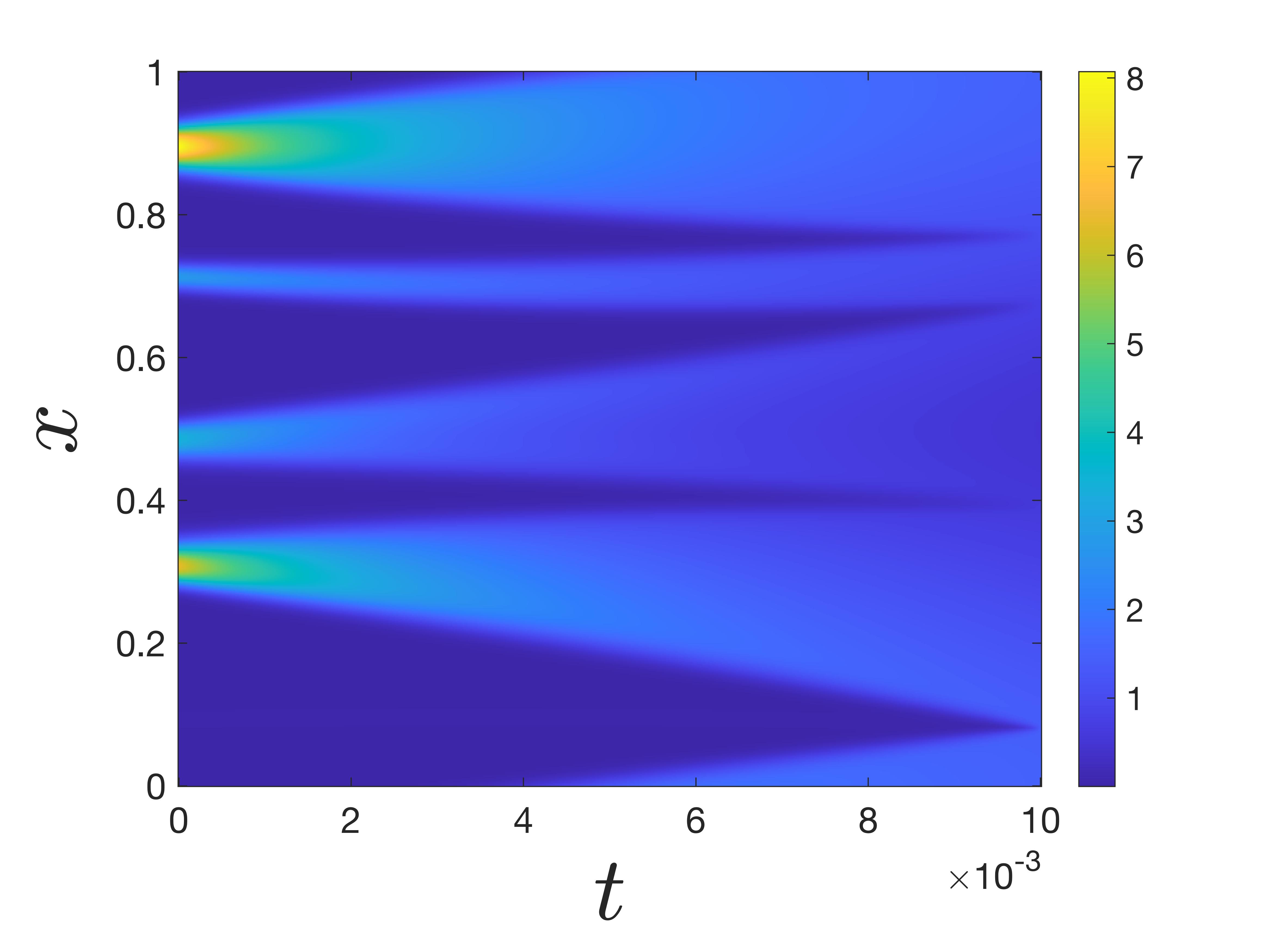} }
  \caption{ \label{fig:Solnu} Profile of the solution $M$ for different $\nu$.}
\end{figure}

\subsubsection{Nonlocal case}
In this section we test the performance of the proposed method on a nonlocal MFG problem. Inspired by the examples used in \cite{liu2020computational}, we consider $V[m](x) = \int_{0}^1 K(x,y)m(y)\mathrm{d}y$, where $K$ is symmetric. The initial-terminal conditions used are
\begin{subequations}\label{eq:bdconv}
  \begin{align}
    u_0(x) &= \cos(2\pi x), \quad x \in [0, 1], \\
    m_T(x) &= 1 + \frac{1}{2}\cos(2\pi x), \quad x \in [0, 1],
  \end{align}
\end{subequations}
and $T = 0.01$, $\nu = 0.4$, $\alpha = 0.1$ The results are presented in Figure
\ref{fig:Solconv}. By checking the local truncation error, we verified that the
computation converge. 

\begin{figure}[h]
  \centering
  \subfloat[$K(x,y)=900\sin(2\pi x)\sin(2\pi y)$]{ \includegraphics[width=0.45\textwidth]{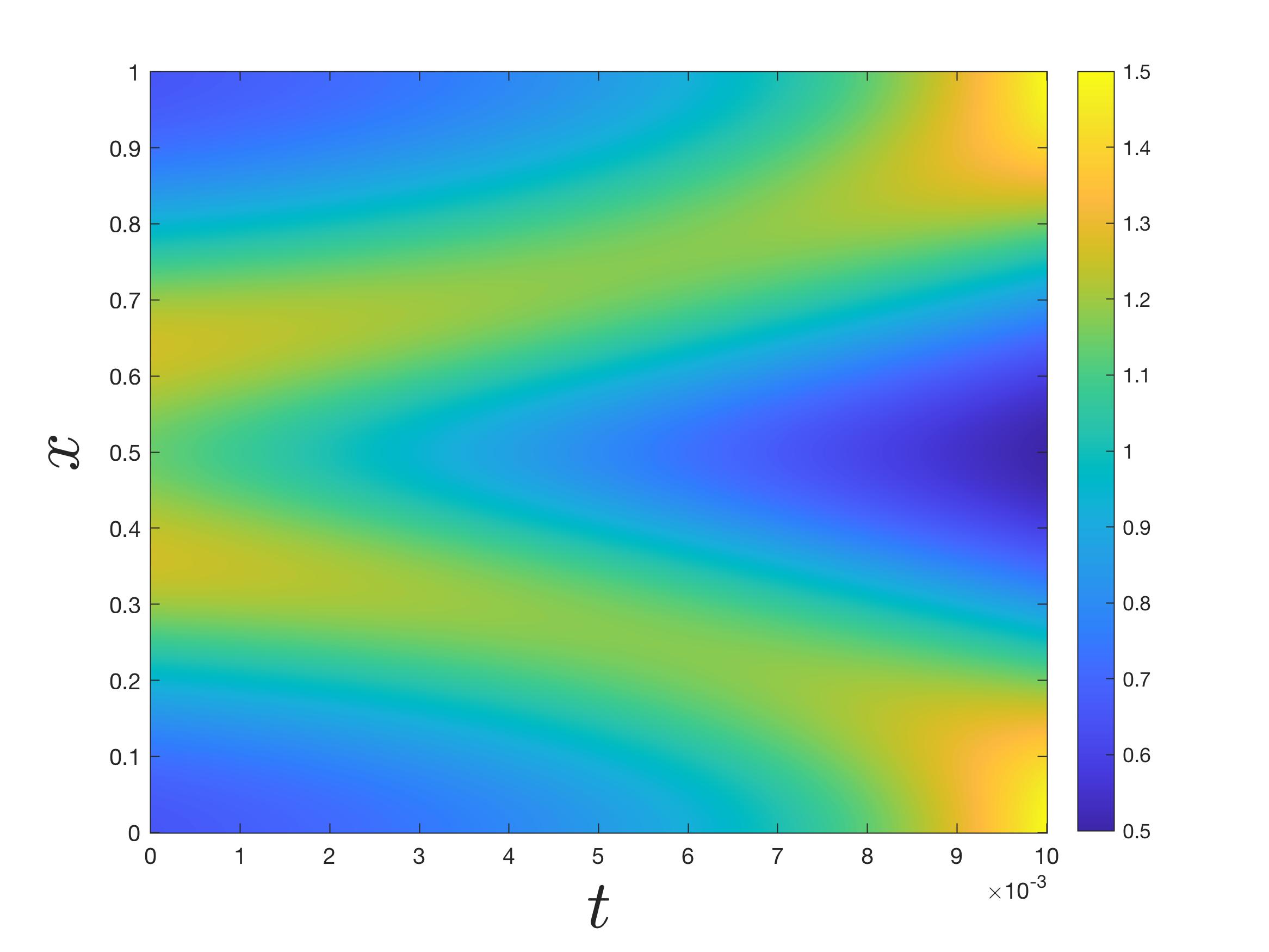} }
  \subfloat[$K(x,y)=2500\cos(6\pi x)\cos(6\pi y)$]{ \includegraphics[width=0.45\textwidth]{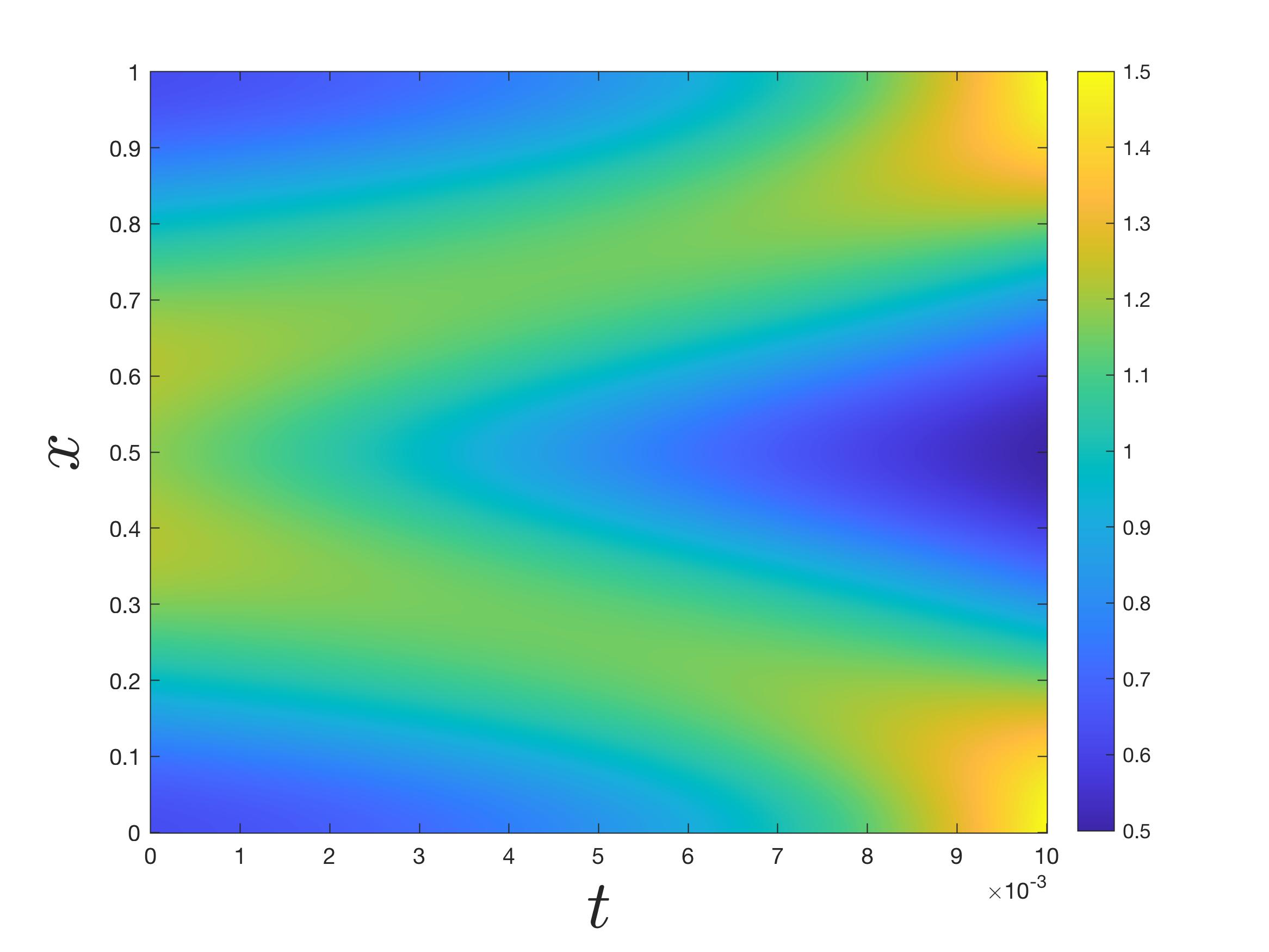} }
  \caption{\label{fig:Solconv} Profile of the solution $M$ for different nonlocal $V[m]$.}
\end{figure}

\subsection{Two-dimensional case}

In this subsection, we perform some tests for the two-dimensional case, and show that all the
conclusions for the one-dimensional case are also valid for the two-dimensional case.

\subsubsection{First-order scheme vs second-order scheme}\label{sec:1stvs2nd2d}
Let us first compare the performance between the first-order scheme and the second-order scheme
proposed in this paper. In two-dimensional cases, the cost of computation on the fine grid is
significantly higher than that on the coarse grid. Since the first-order scheme converges only by a
linear rate, one needs to compute the numerical solution on a very fine grid in order to get a
solution with high accuracy. The faster convergence of the second-order scheme becomes particularly
attractive. The following Hamiltonian is used in the test.
\begin{equation}
  H(x, \nabla u) = \cos(4\pi x_1) + \sin(2\pi x_1) + \sin(2\pi x_2) + |\nabla u|^{\gamma}, 
\end{equation}
with initial-terminal conditions given by
\begin{subequations}
  \begin{align}
    u_0(x) &= \cos(2\pi x_1)+\cos(2\pi x_2), \quad x \in [0, 1], \\
    m_T(x) &= 1 + \frac{1}{2}\cos(2\pi x_1)+\frac{1}{2}\cos(2\pi x_2), \quad x \in [0, 1].
  \end{align}
\end{subequations}
Other parameters and functions are chosen as:
\begin{equation}
  \nu = 1, \quad
  T = 1,\quad
  V[m](x) = m(x)^2, \quad
  \alpha = 1, \quad
  \gamma = 2,
\end{equation}
where $\alpha$ is the relaxation factor. 
With the goal of computing a numerical solution on the grid of size
$N_1^{(7)}=N_2^{(7)}=N_t^{(7)}=128$, we start the computation from the grid of the size
$N_1^{(4)}=N_2^{(4)}=N_t^{(4)}=16$.

Below we check the convergence orders of the two schemes, by calculating the relative error of
numerical solutions on each grid in comparison to the solution on the grid of the size
$N_1^{(8)}=N_2^{(8)}=N_t^{(8)}=256$.  \cref{tab:mfg2d_err} shows that the second-order scheme is truly of
second-order, and the first-order scheme is of first-order. The solution $M$ is shown in
\cref{fig:ctrM} at different times.

\begin{table}[ht]
  \centering
  \begin{tabular}{ccccccc}
    \hline
    $N_1$ & $N_2$ & $N_T$ & $err_U$, $err_M$ (1st)	& Order &  $err_U$, $err_M$ (2nd) & Order\\ \hline\hline
    16 & 16 & 16	&	2.6E-1,	6.5E-2 & --- &	1.0E0,	4.4E-2 & ---\\ \hline
    32 & 32 & 32	&	1.9E-1, 3.3E-2	& 0.49, 0.98 &	2.6E-1,	1.1E-2 & 1.9, 2.0\\ \hline
    64	& 64 &	64	&	1.0E-1, 1.4E-2	& 0.93, 1.2 &	6.4E-2,	2.4E-3 & 2.0, 2.2\\ \hline
    %        128	& 128 & 128	&	0.0405,	0.0045 &	0.0130,	0.0004\\ \hline
    \hline
  \end{tabular}
  \caption{\label{tab:mfg2d_err}The order of convergence of $U$ and $M$.}
\end{table}

\begin{figure}[h]
  \centering
  \subfloat[$t = 0      $]{ \includegraphics[width=0.31\textwidth]{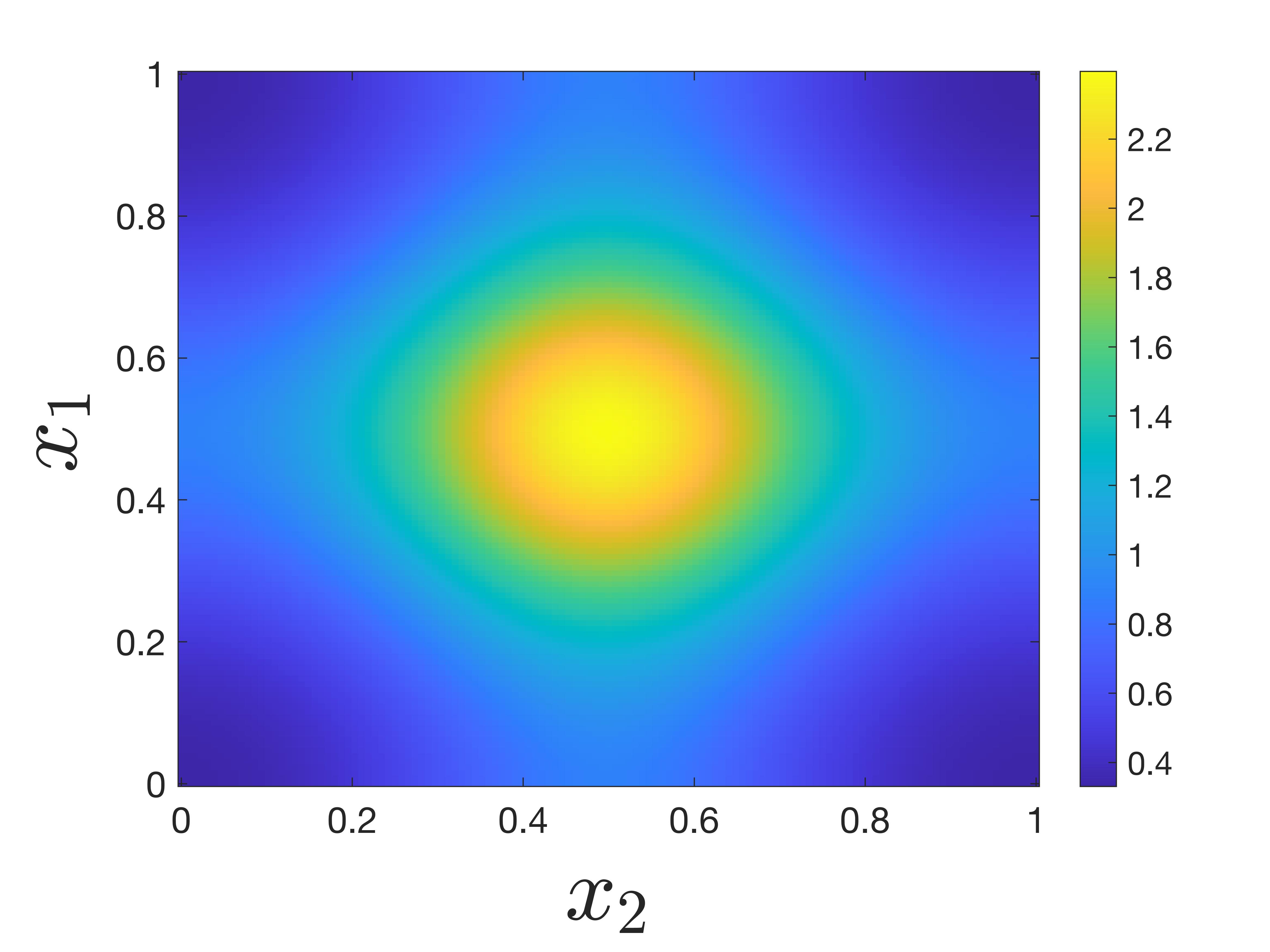} }
  \subfloat[$t = 7/128  $]{ \includegraphics[width=0.31\textwidth]{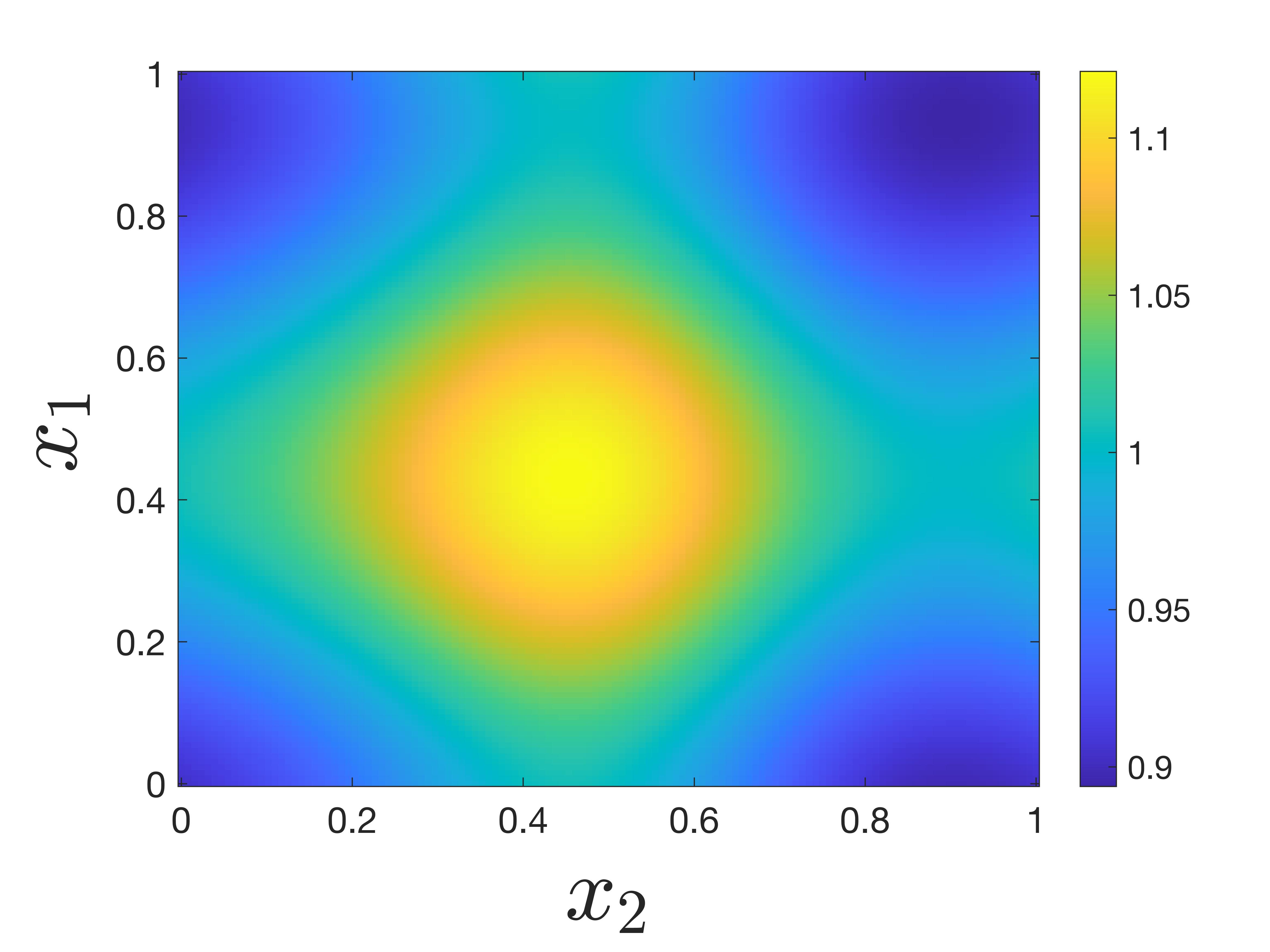} }
  \subfloat[$t = 11/128 $]{ \includegraphics[width=0.31\textwidth]{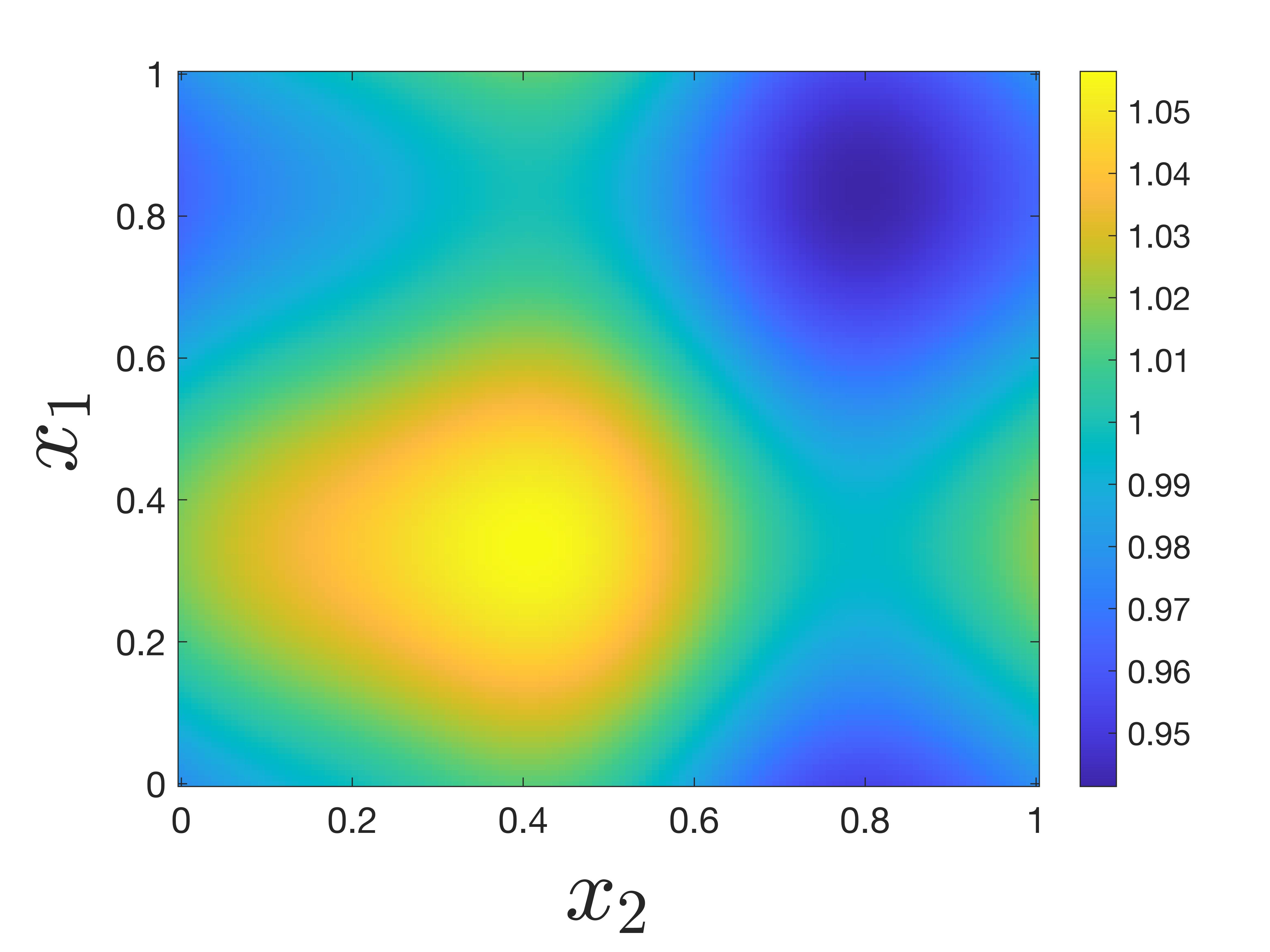} }\\
  \subfloat[$t = 15/128 $]{ \includegraphics[width=0.31\textwidth]{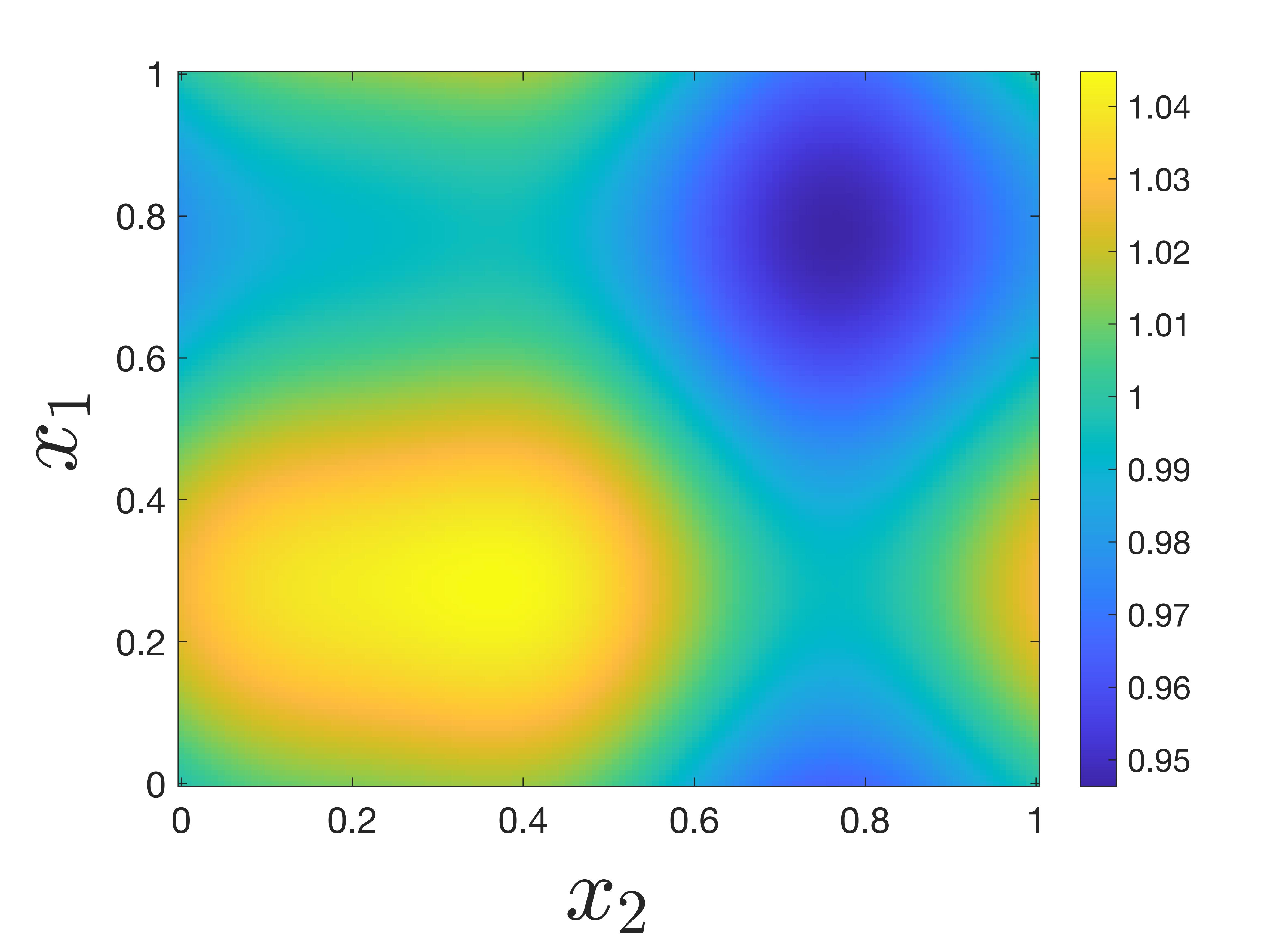} }
  \subfloat[$t = 19/128 $]{ \includegraphics[width=0.31\textwidth]{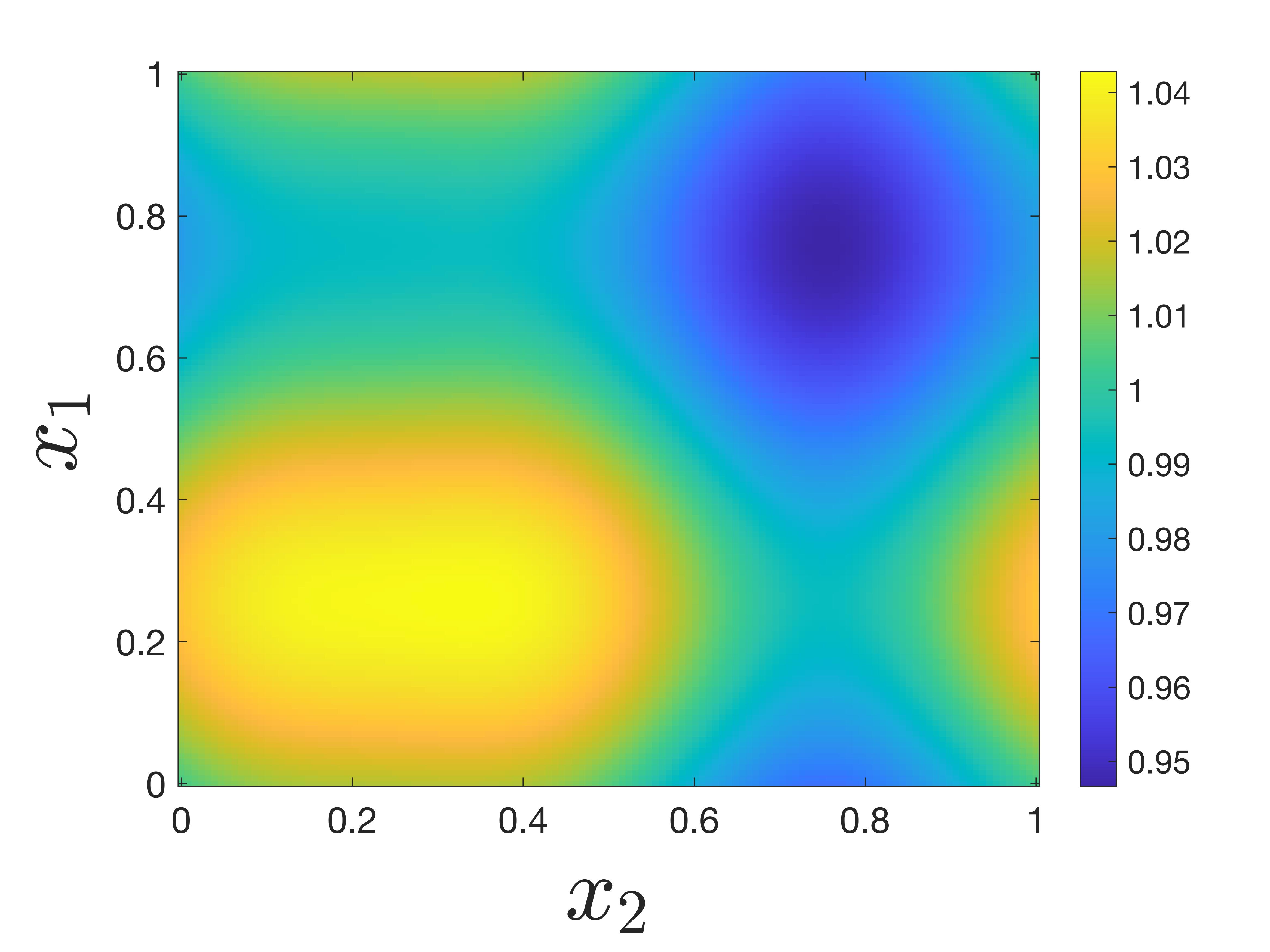} }
  \subfloat[$t = 111/128$]{ \includegraphics[width=0.31\textwidth]{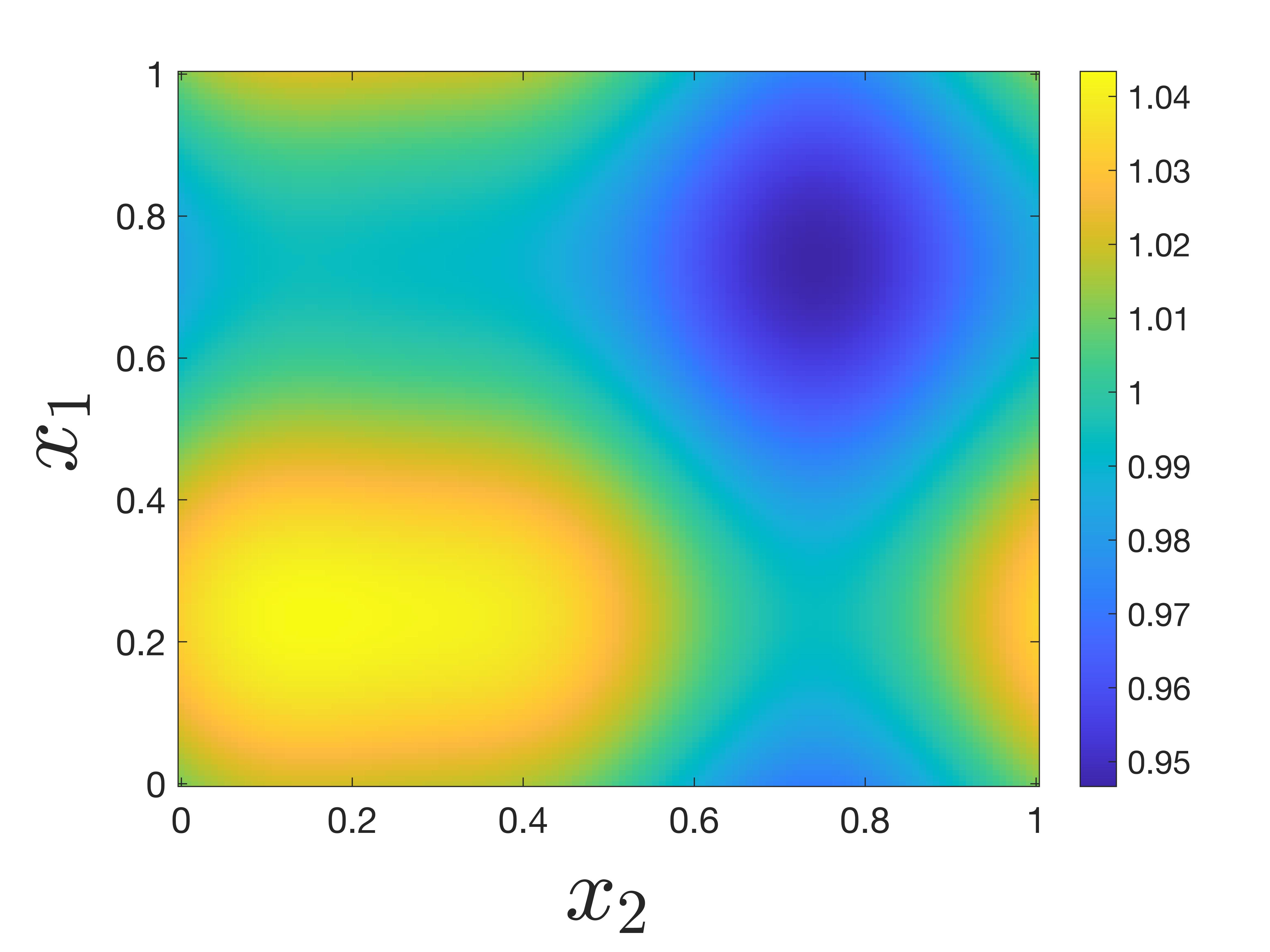} }\\
  \subfloat[$t = 115/128$]{ \includegraphics[width=0.31\textwidth]{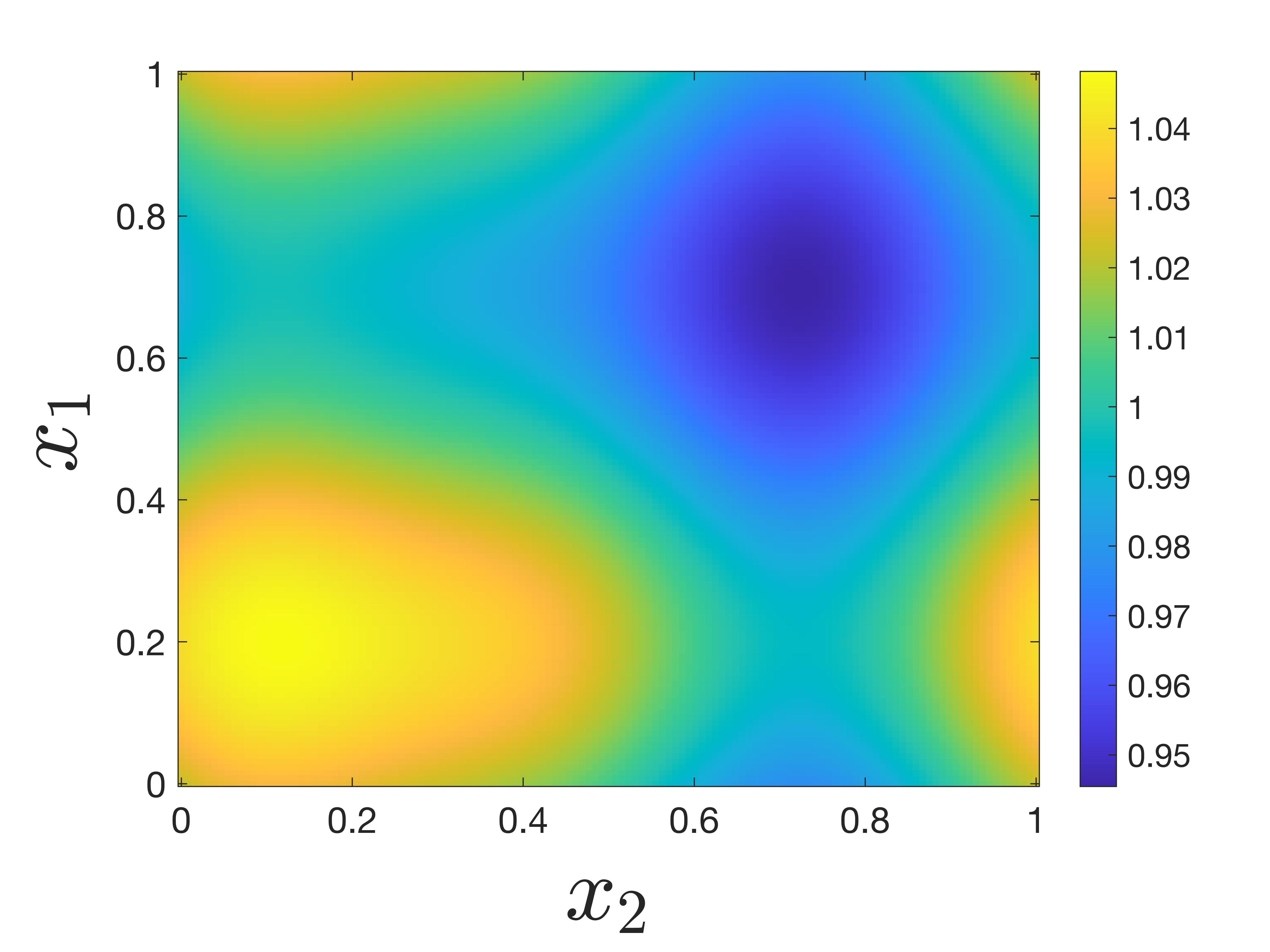} }
  \subfloat[$t = 120/128$]{ \includegraphics[width=0.31\textwidth]{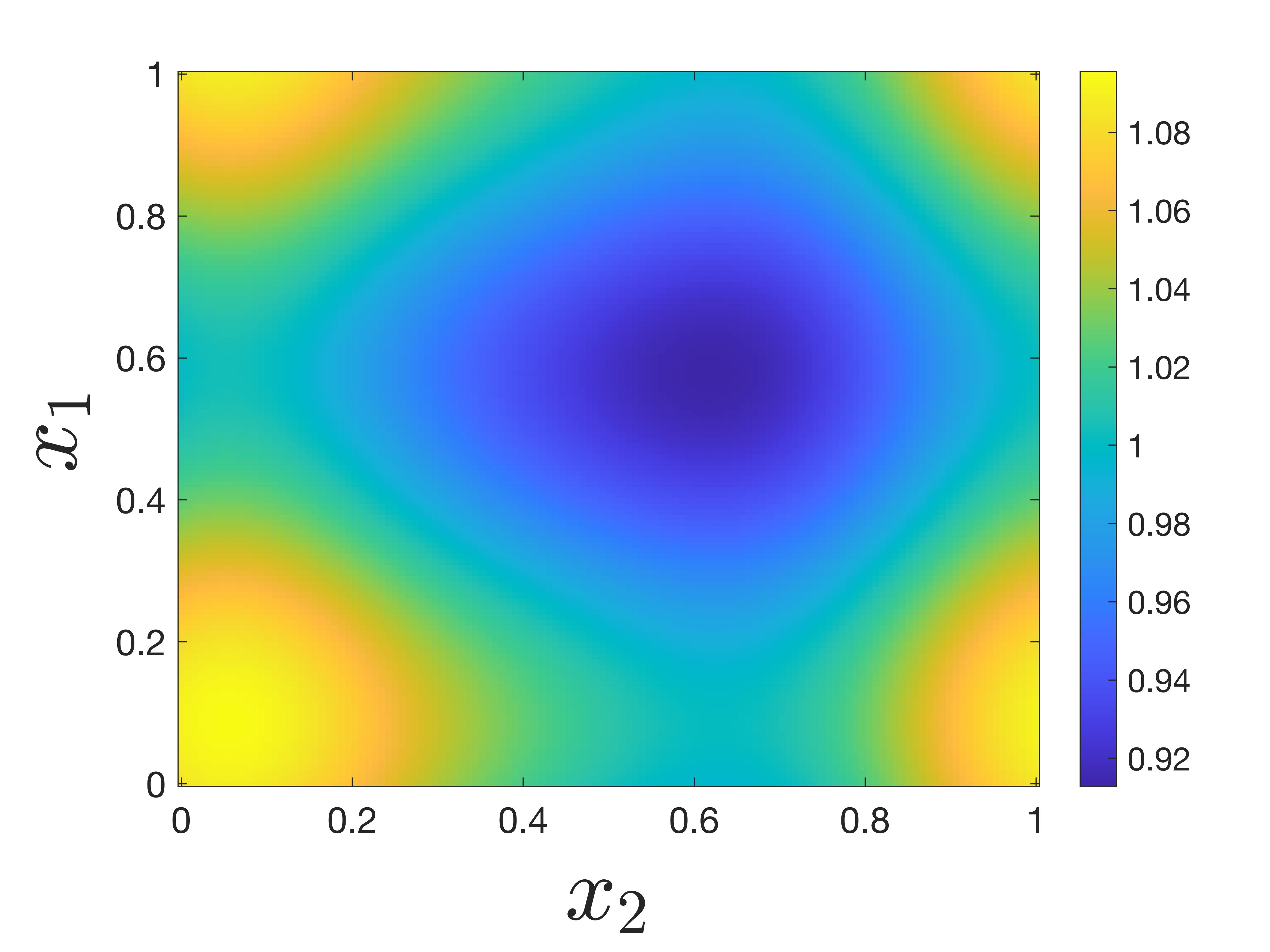} }
  \subfloat[$t = 128/128$]{ \includegraphics[width=0.31\textwidth]{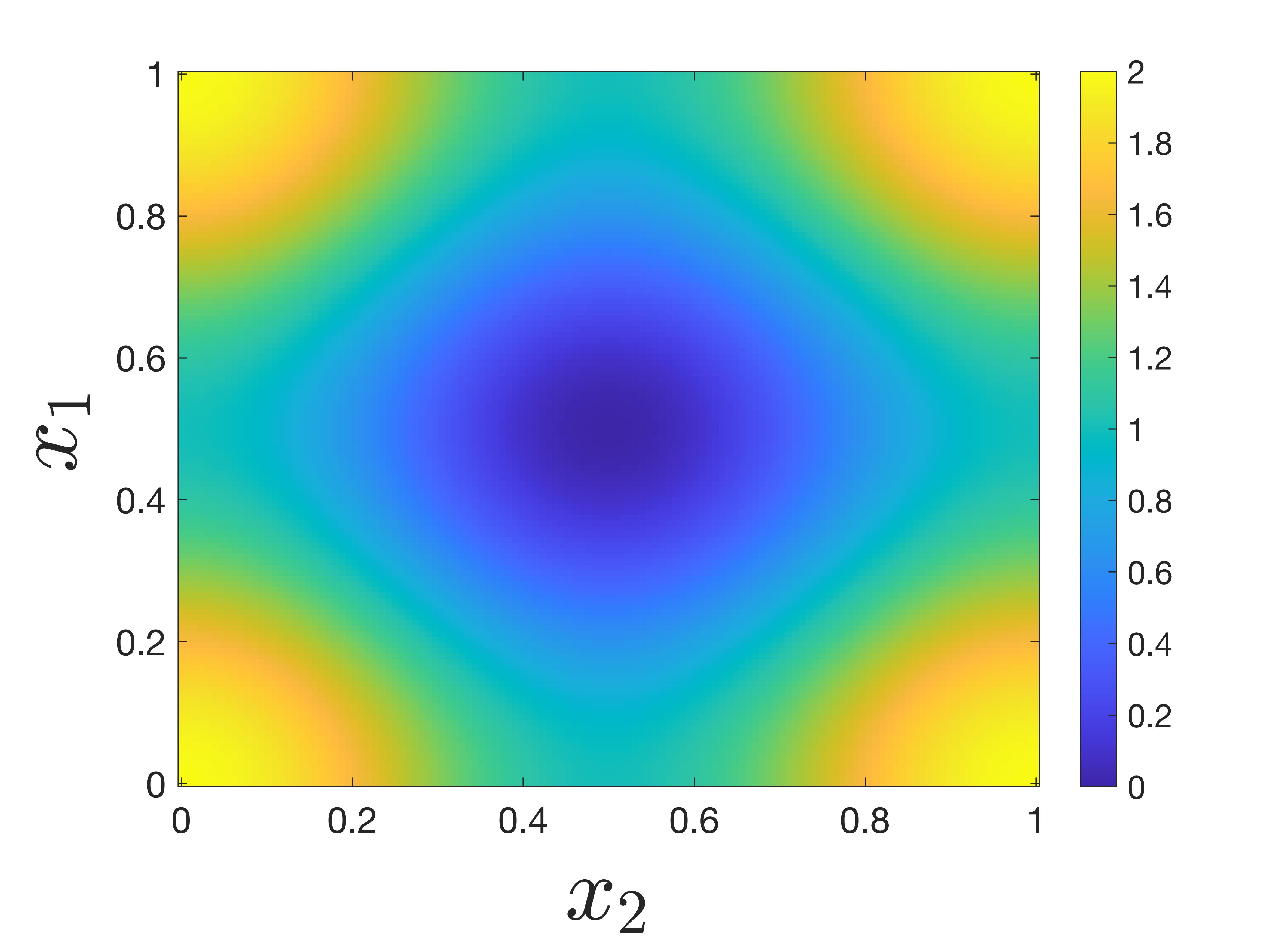} }
  \caption{ \label{fig:ctrM} Profile of the solution of $M$ on different time $t$. }
\end{figure}

\subsubsection{Multiscale vs. alternating sweeping}\label{sec:acc}

The following example provides a clear demonstration how the multiscale method can accelerate
convergence. The example used is the one in \cref{sec:1stvs2nd2d} with $\gamma = 1.5$ and
$\alpha=0.5$. The size of the finest grid is $N_1^{(L)}=N_2^{(L)}=N_t^{(L)}=320$. We compare the performance of using multiscale method with the performance of using only alternating sweeping. The alternating sweeping without multiscale takes $9$ steps to reach the accuracy desired with total time equal to $1.2\times10^4$ seconds. When applying the multiscale algorithm, the coarsest grid size is chosen as $N_1^{(L_0)}=N_2^{(L_0)}=N_t^{(L_0)}=20$ and it takes $1.6\times10^3$ seconds in total on the same machine. The acceleration ratio is equal to $1.2\times10^4/1.6\times10^3=7.5$.

\begin{figure}[h]
  \centering
  \includegraphics[width=0.5\linewidth]{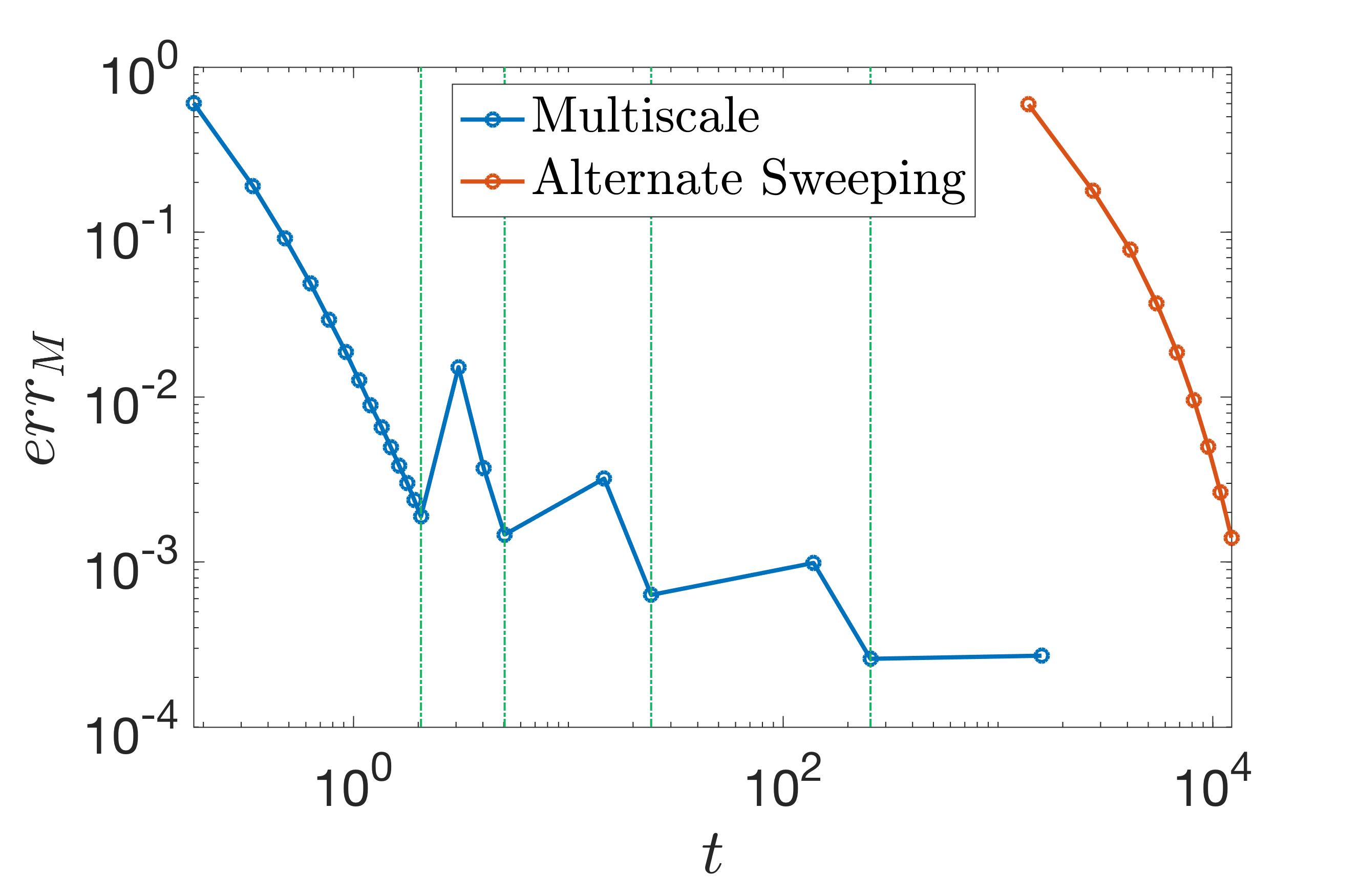}
  \caption{ The residual \eqref{fix} of the multiscale method and the alternating sweeping
algorithm.  Each point in the plot denotes the end time of an alternating sweeping step, thus the
plots do not start from $t=0$.}
  \label{fig:acc}
\end{figure}

The reason of this acceleration can be seen clearly in the computation process shown in
\cref{fig:acc}. The interpolation process introduces error, which is why the blue curve goes up at
the first step after each dash-dot line. As we can see, the time cost on the coarse grid is
negligible compared to the cost on the finest grid: although we need $14$ steps on the coarsest
grid, the cost of it is little, and we get a much better initial guess by the computation on coarse
grids. In the multiscale algorithm, the error decreases rapidly, leading to only $1$ alternating
sweeping on the finest grid after applying the multiscale method. In the case without multiscale,
the computation on the finest grid has to start from a much worse initial guess, leading it to take
a lot more steps to converge.

\section{Conclusion}\label{sec:conclusion}
We introduce an alternating sweeping method, which decouples the forward-backward MFG system into a
forward HJB system and a backward KFP system and allows for the use of classical time marching
numerical schemes on it. We also introduce a multiscale method along with relaxation technique in order
to guarantee the convergence.  A new second-order scheme is proposed for the time and spatial
discretization. Numerical results show the proposed multiscale algorithm with alternating sweeping
is robust and efficent in both one and two dimensions.

To simplify our discussion, we have only considered the periodic boundary conditions in space. An
immediate future work is to incorporate other boundary conditions, for example following the work of
\cite{Benamou2017}. With proper adaptation, the proposed method would also be applied to simliar
problems, such as the planning problems in \cite{achdou2012mean}.

\section*{Acknowledgements}
The work of Y.F. and L.Y. is partially supported by the U.S. Department of Energy, Office of
Science, Office of Advanced Scientific Computing Research, Scientific Discovery through Advanced
Computing (SciDAC) program and the National Science Foundation under award DMS-1818449.

\bibliographystyle{plain}
\bibliography{mfg}
\appendix
\section{Proof of \cref{lemma:specrad}}\label{proof:specrad}
Before the proof of \cref{lemma:specrad}, we introduce the 
famous spectral radius formula (Gelfand's formula \cite{Plax}) as follows.
\begin{lemma}
  For any element T of a Banach algebra, we have 
  \begin{equation}
    \rho(T) = \lim\limits_{n \rightarrow \infty} \|T^k\|^{1/k},
  \end{equation}
  where $\rho$ is the spectral radius. 
  And in particular, we have 
  \begin{equation}
    \rho(A) = \lim\limits_{n \rightarrow \infty} \|A^k\|^{1/k},
  \end{equation}
  for any matrix $A \in \mathbb{C}^{n\times n}$ and any matrix norm $\|\cdot\|$. 
\end{lemma}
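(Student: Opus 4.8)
The plan is to prove $\rho(T)=\lim_{k\to\infty}\|T^k\|^{1/k}$ by working in a unital Banach algebra $\mathcal{A}$, with the spectral radius defined as $\rho(T)=\sup\{|\lambda|:\lambda\in\sigma(T)\}$ where $\sigma(T)$ is the spectrum, and then squeezing $\|T^k\|^{1/k}$ between $\rho(T)$ from below and above as $k\to\infty$. First I would establish the easy inequality $\rho(T)\le\|T^k\|^{1/k}$, valid for every $k\ge1$. The key tool is the polynomial spectral mapping theorem applied to $z\mapsto z^k$, giving $\sigma(T^k)=\{\lambda^k:\lambda\in\sigma(T)\}$ and hence $\rho(T^k)=\rho(T)^k$. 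Combined with the elementary bound $\rho(S)\le\|S\|$ (which follows because $\lambda-S$ is invertible through its Neumann series whenever $|\lambda|>\|S\|$), this yields $\rho(T)^k=\rho(T^k)\le\|T^k\|$, so $\rho(T)\le\inf_k\|T^k\|^{1/k}\le\liminf_k\|T^k\|^{1/k}$.

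The reverse inequality $\limsup_k\|T^k\|^{1/k}\le\rho(T)$ is where I expect the main obstacle, and it rests on complex analysis of the resolvent. The starting point is the Neumann series
\[
R(\lambda)=(\lambda-T)^{-1}=\sum_{k=0}^{\infty}\frac{T^k}{\lambda^{k+1}},
\]
convergent for $|\lambda|>\|T\|$, together with the fact that $\lambda\mapsto R(\lambda)$ extends to an $\mathcal{A}$-valued holomorphic function on the whole resolvent set $\{\lambda:|\lambda|>\rho(T)\}$. To exploit analyticity I would reduce to the scalar case: for each bounded linear functional $\varphi\in\mathcal{A}^*$, the function $\lambda\mapsto\varphi(R(\lambda))$ is holomorphic on $\{|\lambda|>\rho(T)\}$, and by uniqueness of Laurent coefficients its expansion there must agree with $\sum_k\varphi(T^k)\lambda^{-(k+1)}$. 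Hence this series converges for every $|\lambda|>\rho(T)$, forcing $\varphi(T^k/\lambda^{k+1})\to0$, and in particular $\sup_k|\varphi(T^k/\lambda^{k+1})|<\infty$. Viewing $x_k=T^k/\lambda^{k+1}$ through the isometric embedding $\mathcal{A}\hookrightarrow\mathcal{A}^{**}$ and applying the uniform boundedness principle on the Banach space $\mathcal{A}^*$, I obtain $\sup_k\|T^k/\lambda^{k+1}\|<\infty$ for each fixed $\lambda$. Writing $|\lambda|=r$ and extracting $k$-th roots gives $\|T^k\|^{1/k}\le r\,(Cr)^{1/k}\to r$, so $\limsup_k\|T^k\|^{1/k}\le r$ for every $r>\rho(T)$; letting $r\downarrow\rho(T)$ closes the inequality. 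Together with the first direction this shows the limit exists and equals $\rho(T)$.

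Finally, for the matrix specialization $A\in\mathbb{C}^{n\times n}$, I would first invoke the Banach-algebra result with the operator norm $\|\cdot\|_{op}$, which is submultiplicative and makes $M_n(\mathbb{C})$ a Banach algebra, obtaining $\rho(A)=\lim_k\|A^k\|_{op}^{1/k}$. To pass to an arbitrary matrix norm $\|\cdot\|$, I would use equivalence of norms on the finite-dimensional space $M_n(\mathbb{C})$: there exist constants $0<c\le C$ with $c\|B\|_{op}\le\|B\|\le C\|B\|_{op}$ for all $B$, so that $c^{1/k}\|A^k\|_{op}^{1/k}\le\|A^k\|^{1/k}\le C^{1/k}\|A^k\|_{op}^{1/k}$, and since $c^{1/k},C^{1/k}\to1$ the limiting value is unchanged. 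This confirms the formula is independent of the chosen norm and equals $\rho(A)$.
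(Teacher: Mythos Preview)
Your proof is correct and follows the standard argument for Gelfand's formula (lower bound via the polynomial spectral mapping theorem and $\rho(S)\le\|S\|$; upper bound via analyticity of the resolvent, uniqueness of Laurent coefficients, and the uniform boundedness principle; matrix case via equivalence of norms). However, the paper does not actually prove this lemma: it is introduced in the appendix as ``the famous spectral radius formula (Gelfand's formula)'' with a citation, and is then invoked as a black box to establish $\rho(AB)=\rho(BA)$. So there is no proof in the paper to compare against; you have supplied a full argument where the authors simply appeal to the literature.
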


\begin{proof}[Proof of \cref{lemma:specrad}]
  When $A=0$ or $B=0$, the conclusion holds trivially. From now on we assume that $A$ and $B$ are
  both non-zero. 

  For any matrix in a finite dimensional vector space, the matrix norm is always finite, and since
  $A$ and $B$ are non-zero, $0<\|A\| < \infty,~ 0<\|B\| < \infty$. Thus $\lim\limits_{n \rightarrow
  \infty} \|A\|^{\frac{1}{n}} = \lim\limits_{n \rightarrow \infty} \|B\|^{\frac{1}{n}} = 1$. Notice
  that
  \begin{equation}
    \|(AB)^n\|^{\frac{1}{n}} \leq \|A\|^{\frac{1}{n}} \|(BA)^{n-1}\|^{\frac{1}{n}} \|B\|^{\frac{1}{n}}.
  \end{equation}
  Since $\lim\limits_{n \rightarrow \infty} \|(BA)^{n-1}\|^{\frac{1}{n}} = \lim\limits_{n \rightarrow \infty} (\|(BA)^{n-1}\|^{\frac{1}{n-1}})^{\frac{n-1}{n}} = \rho(BA) $, $\lim\limits_{n \rightarrow \infty} \|A\|^{\frac{1}{n}} = \lim\limits_{n \rightarrow \infty} \|B\|^{\frac{1}{n}} = 1$, we have $\rho(AB)\leq\rho(BA)$ by taking the limit $n\rightarrow \infty$ in the inequality above. Put $A$ in the place of $B$ and $B$ in the place of $A$, we get $\rho(BA)\leq\rho(AB)$. Thus $\rho(AB)=\rho(BA)$.
\end{proof}

\section{Proof of \cref{prop:order}}\label{proof:order}

\begin{proof}[Proof of \cref{prop:order}]
  Supposing that $u$ and $m$ are the true solution of the equation \cref{eq:MFG}, we calculate the
  difference between the finite difference operator and the differential operator, and get
  \begin{equation}
    \frac{u(x_i, t_{n+1})-u(x_i, t_{n})}{\tau} - \pd{u}{t}(x_i, t_{n+\frac{1}{2}}) = -\frac{1}{12}\pdt{u}{t}{3}(x_i, t_{n+\frac{1}{2}}){\tau}^2 + O({\tau}^4),
  \end{equation}
  \begin{equation}
    \frac{u(x_{i+1}, t_n)-2u(x_i,t_n)+u(x_{i-1},t_n)}{h^2}-\pdt{u}{x}{2}(x_i,t_n) = -\frac{1}{12}\pdt{u}{x}{4}(x_i, t_n)h^2+O(h^4).
  \end{equation}
  Furthermore, we have:
  \begin{align*}
      \frac{3u(x_i)-4u(x_{i-1})+u(x_{i-2})}{2h} - \frac{\partial u}{\partial x}(x_i) &=  \frac{h^2}{3}\frac{{\partial}^3 u}{\partial x^3}(\xi), \quad \xi \in [x_{i-2}, x_i], \\
      \frac{-3u(x_i)+4u(x_{i+1})-u(x_{i+2})}{2h} - \frac{\partial u}{\partial x}(x_i) &=  \frac{h^2}{3}\frac{{\partial}^3 u}{\partial x^3}(\xi), \quad \xi \in [x_{i}, x_{i+2}],
  \end{align*}
  thus 
  \begin{equation*}
      \begin{aligned}
          \cD^+ u(x_i, t_n) - \pd{u}{x}(x_i, t_n) &= O(h^2), &
          \cD^- u(x_i, t_n) - \pd{u}{x}(x_i, t_n) &= O(h^2). 
      \end{aligned}
  \end{equation*}
  Thus we have
  \begin{equation*}
    \begin{aligned}
      \left|H(x_i, \cD u) - H(x_i, \pd{u}{x})\right| &\leq  \max\left(\left|\pd{H}{p}(x_i, \pd{u}{x})(\cD^+u-\pd{u}{x})\right|, \left|\pd{H}{p}(x_i, \pd{u}{x})\cdot(\cD^-u-\pd{u}{x})\right|\right) + O(h^6) \\
      &= O(h^2).
    \end{aligned}
  \end{equation*}
  Because $V$ is defined pointwisely, operator $V$ has no truncation error. The analysis for the Fokker-Planck equations is the same. 
  Combine the formula above, we get the desired conclusion. 
\end{proof}

\section{Proof of \cref{prop:mass}}\label{proof:mass}
\begin{proof}[Proof of \cref{prop:mass}]
  The total mass can be represented as:
  \begin{equation}
    total ~ mass = h\sum_{i}{m_i} = h\innerproduct{m}{1}{h}. 
  \end{equation}
  From \eqref{KFPfd} we know: 
  \begin{equation*}
    {m^n-m^{n+1}} = \frac{\tau}{2}(-\cL (m^n + m^{n+1}) + B(m^n, u^n) + B(m^{n+1}, u^{n+1})), \quad n = 0, 1, 2, \ldots, N_t - 1, 
  \end{equation*}
  thus 
  \begin{equation*}
    \begin{aligned}
      \innerproduct{m^n}{1}{h} - \innerproduct{m^{n+1}}{1}{h} & = 
      -\frac{ \tau}{2}\innerproduct{\cL(m^n + m^{n+1})}{1}{h} + \frac{\tau}{2}(\innerproduct{B(m^n, u^n)}{1}{h} + \innerproduct{B(m^{n+1}, u^{n+1})}{1}{h})\\
      & = -\frac{\nu \tau}{2}\innerproduct{\cD_0(m^n + m^{n+1})}{\cD_0 1}{h} - \frac{\tau}{2}(\innerproduct{m^n\nabla_p \cH^n}{\cD 1}{h} + \innerproduct{m^{n+1}\nabla_p H^{n+1}}{\cD 1}{h})\\
      & = 0,  \quad n = 0, 1, 2, \ldots, N_t - 1, 
    \end{aligned}
  \end{equation*}
  where $\cD_0 W = \left({\frac{W_{i+1} - W_{i}}{h}, \frac{W_{i} - W_{i-1}}{h}} \right)$, and $1$ denotes the grid function whose every coorinate equals $1$, and the last equation is correct because every coordinate of $\cD 1$ and $\cD_0 1$ is $0$. 

  As a result, 
  \begin{equation}
    h\innerproduct{m^n}{1}{} = h\innerproduct{m^{N_t}}{1}{} = 1,  
  \end{equation}
  which is exactly the conclusion we want to draw. 
\end{proof}

\end{document}